\newtheorem{theorem}{Theorem} 
\newtheorem{proposition}[theorem]{Proposition}
\newtheorem{lemma}[theorem]{Lemma}
\newtheorem{claim}[theorem]{Claim}
\newtheorem{problem}[theorem]{Problem}
\newtheorem{conjecture}[theorem]{Conjecture}\theoremstyle{remark}
\def \bdm {\begin{displaymath}}
	\def \edm {\end{displaymath}}
\newcommand{\N}{\mathbb{N}}
\newcommand{\E}{\mathbb{E}}
\renewcommand{\P}{\mathbb{P}}
\newcommand{\RT}{\text{{\bf RT}}}
\newcommand{\Q}{\text{{\bf Q}}}
\newcommand{\R}{\text{{\bf R}}}
\newcommand{\mcd}{\delta^{\text{cr}}}
\title{Ramsey-Tur\'{a}n problems with small independence numbers}
\author{
	J\'ozsef Balogh\thanks{Department of Mathematics, University of Illinois at Urbana-Champaign, Urbana, Illinois 61801, USA. E-mail: \texttt{\{jobal, cechen4, mccourt4, clchris2\}@illinois.edu}.}  \thanks{Research is partially supported by NSF Grant DMS-1764123, NSF RTG grant DMS 1937241, Arnold O. Beckman Research Award (UIUC Campus Research Board RB 22000), and the Langan Scholar Fund (UIUC). Grace McCourt was partially funded by RTG grant DMS 1937241, while working on this project.}
	\and 
	Ce Chen\footnotemark[1]
	\and 
	Grace McCourt\footnotemark[1]
	\and 
	Cassie Murley\footnotemark[1]
}
\date{}
\begin{document}

\date{}
\maketitle

\begin{abstract}
	Given a graph $H$ and a function $f(n)$, the Ramsey-Tur\'an number $\RT(n,H,f(n))$ is the maximum number of edges in an $n$-vertex $H$-free graph with independence number at most $f(n)$. For $H$ being a small clique, many results about $\RT(n,H,f(n))$ are known and we focus our attention on $H=K_s$ for $s\leq 13$.
	
	By applying Szemer\'edi's Regularity Lemma, the dependent random choice method and some weighted Tur\'an-type results, we prove that these cliques have the so-called \emph{phase transitions} when $f(n)$ is around the inverse function of the off-diagonal Ramsey number of $K_r$ versus a large clique $K_n$ for some $r\leq s$.
\end{abstract}

\section{Introduction}

\subsection{History}\label{history}

Given a graph $H$ and a function $f(n)$, the Ramsey-Tur\'an number, denoted by $\RT(n,H,f(n))$, is the maximum number of edges in an $n$-vertex $H$-free graph with independence number at most $f(n)$. Such problems were studied first by Erd\H os and S\'os~\cite{ErdosSos1}. Denote by $T(n,r)$ the Tur\'an graph, which is the complete $r$-partite graph on $n$ vertices where the size of each part is as equal as possible. Tur\'an theorem~\cite{Turans} states that $T(n,r)$ is the unique $n$-vertex $K_{r+1}$-free graph with the maximum number of edges. Since the independence number of $T(n,r)$ is linear in $n$, substantially different structure appears when $H$ is a clique and $f(n)$ is a sublinear function, i.e. $f(n)=o(n)$. Erd\H os and S\'os~\cite{ErdosSos1} proved that when the forbidden subgraph $H \subseteq G$ is an odd clique, then for $s \geq 1$,\[
\RT(n,K_{2s+1},o(n)) = \frac12\left( \frac{s-1}{s} \right) n^2 + o(n^2).\] The case when $H$ is an even clique proved to be harder, hence more interesting. Szemer\'edi~\cite{Szemeredi} proved $\RT(n,K_4,o(n)) \leq \frac{1}{8}n^2+o(n^2)$, which was the first published application of his Regularity Lemma. Bollob\'as and Erd\H os~\cite{BollErdos} proved $\RT(n,K_4,o(n))=\frac{1}{8}n^2+o(n^2)$ by constructing the so-called Bollob\'as-Erd\H os graph, which was a major surprise at the time, as the function was expected to be $o(n^2)$. Finally, Erd\H os, Hajnal, S\'os and Szemer\'edi~\cite{EHSS} settled the even clique case, showing for $s \geq 2$ \[\RT(n,K_{2s},o(n)) = \frac12 \left( \frac{3s-5}{3s-2} \right)n^2 + o(n^2).\]

\noindent Let \[ \overline{\rho\tau}(H,f)\coloneqq \limsup_{n\to\infty}\frac{\RT(n, H, f(n))}{n^2}\text{\ \ \ and\ \ \ }\underline{\rho\tau}(H,f)\coloneqq \liminf_{n\to\infty}\frac{\RT(n, H, f(n))}{n^2}. \] If $\overline{\rho\tau}(H,f)=\underline{\rho\tau}(H,f)$, then we define $\rho\tau(H,f)\coloneqq \overline{\rho\tau}(H,f)=\underline{\rho\tau}(H,f)$
\footnote{The existence of $\rho\tau(H,f)$ is expected. In the following results, we will abuse the notation a bit, i.e., the upper (lower) bounds of $\rho\tau(H,f)$ are actually upper (lower) bounds of $\overline{\rho\tau}(H,f)$ ($\underline{\rho\tau}(H,f)$). }
and call it the \emph{Ramsey-Tur\'an density of the graph $H$ with respect to the function $f$}. It is easy to see that $\rho\tau(H,f)=c$ if and only if $\RT(n, H, f(n))=cn^2+o(n^2)$. Let $\rho\tau(H,o(f))\coloneqq\lim_{\delta\to 0}\rho\tau(H,\delta f)$. We say that $H$ has a \emph{Ramsey-Tur\'an phase transition} at $f(n)$ if \[\rho\tau(H,f)-\rho\tau(H,o(f))>0.\] Combining Tur\'an Theorem and the above results, we conclude that cliques have their first phase transition at $f(n)=n$. It is natural to investigate whether phase transitions exist for other values of $f(n)$. For cases concerning small cliques, many results are known. We summarize most of them after introducing the necessary definitions and notation. 

The \emph{Ramsey number} $\R(t,m)$ is the minimum integer $n$ such that every $n$-vertex graph contains either a clique $K_t$ or an independent set of size $m$. We use $\Q(t,n)$ to denote the \emph{inverse Ramsey number}, which is the minimum independence number of a $K_t$-free graph on $n$ vertices. In other words, $\R(t,m)=n$ if and only if $\Q(t,n) = m$. We use the function $\Q(t,n)$ usually as follows: If an $n$-vertex graph $G_n$ satisfies $\alpha(G_n)=o(\Q(t,n))$, then every vertex set of $G_n$ of size at least $cn$ spans a $K_t$, for every fixed constant $c$. It follows immediately that we should restrict our attention to $f(n)\geq \Q(t,n)$ if the forbidden graph is $H=K_t$: as by the definition of $\Q(t,n)$, there exists no $n$-vertex $K_t$-free graph with independence number less than $\Q(t,n)$.

In this paper, all logarithms are base $2$ and $w(n)$ is a function going to infinity arbitrarily slowly as $n\to\infty$. Then, $g(n)\coloneqq ne^{-\omega(n)\sqrt{\log n}}$ satisfies that $n^{1-\epsilon}\ll g(n)\ll n$ for every $\epsilon>0$.

It was proved by Shearer~\cite{Shearer}, Pontiveros, Griffiths, Morris~\cite{PGM}, and Bohman, Keevash~\cite{BK-R} that \[
\left(\frac{1}{4}-o(1)\right)\frac{m^2}{\log m}\leq \R(3,m)\leq \left(1+o(1)\right)\frac{m^2}{\log m},
\] which implies \[
\left(\frac{1}{\sqrt{2}}-o(1)\right)\sqrt{n\log n}\leq \Q(3,n)\leq \left(\sqrt{2}+o(1)\right)\sqrt{n\log n}.
\] 

For $t\geq 4$, we do not know the exact order of magnitude of $\R(t,m)$, but there are many well-known conjectures about them, below is one of them.

\begin{conjecture}\label{Ramsey conj}
	For every integer $\ell\geq 3$, there exist $c=c(\ell)>0$ and $N=N(\ell)>0$ such that if $m>N$, then \[
	\R(\ell-1, m)\leq \R(\ell,m)/m^c.\]
\end{conjecture}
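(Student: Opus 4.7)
This is a well-known open problem in Ramsey theory, and I do not expect to resolve it in full; I will outline the natural constructive approach and pinpoint where it breaks down. The goal is to witness $\R(\ell, m) \geq m^c \R(\ell-1, m)$ by exhibiting a $K_\ell$-free graph on roughly $m^c(\R(\ell-1, m) - 1)$ vertices with independence number at most $m - 1$.

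The case $\ell = 3$ is known unconditionally: $\R(2, m) = m$, while the bounds of Shearer and Pontiveros--Griffiths--Morris cited above give $\R(3, m) = \Theta(m^2/\log m)$, so any $c < 1$ works. For $\ell \geq 4$ my plan is a randomized blow-up. Fix a $K_{\ell-1}$-free graph $G_0$ on $n_0 = \R(\ell-1, m) - 1$ vertices with $\alpha(G_0) \leq m - 1$, take $k = m^c$ vertex-disjoint copies of $G_0$, and include every cross-copy edge independently with probability $p$. Since $G_0$ is $K_{\ell-1}$-free, each $K_\ell$ in the resulting graph must split across at least two copies with at most $\ell - 2$ vertices per copy; an $(a, \ell - a)$-split across two copies needs $a(\ell - a)$ random cross-edges, so its expected count is at most
\[
\binom{k}{2}\, K_a(G_0)\, K_{\ell-a}(G_0)\, p^{a(\ell - a)},
\]
where $K_j(G_0)$ denotes the number of $K_j$ subgraphs of $G_0$. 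The worst exponent of $p$ occurs at the unbalanced split $(2, \ell-2)$, giving a constraint of shape $p \lesssim k^{-1/(\ell-2)} n_0^{-\ell/(2\ell-4)}$; one sums the analogous bounds over all partitions of $\ell$ into parts of size at most $\ell - 2$ and asks for total $o(1)$. On the other hand, killing every independent $m$-transversal---one that takes $b_i \leq m - 1$ vertices from copy $i$ with $\sum b_i = m$---requires $(1 - p)^{\sum_{i<j} b_i b_j}$, union-bounded over all such configurations, to be $o(1)$.

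The main obstacle is the direct tension between these two constraints. Suppressing $K_\ell$'s forces $p$ to be roughly $n_0^{-1/2 - o(1)}$, whereas the worst independence configuration---balanced across two copies---requires $(1 - p)^{m^2/4}$ to beat the squared count of $(m/2)$-independent sets of $G_0$. Without quantitative control on how many medium-sized independent sets a near-extremal $K_{\ell-1}$-free graph has, the two windows do not overlap, and this structural information is precisely what is missing from our current understanding of $\R(\ell-1, m)$. A plausible refinement is to replace the purely random cross-edges by a pseudorandom scheme tuned to a specific near-extremal $G_0$, which reduces the problem to understanding which Ramsey-extremal graphs behave well under joins; that is where I would expect any serious attempt to stall, and it is essentially why this conjecture remains open for $\ell \geq 4$.
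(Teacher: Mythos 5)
You have correctly identified that this is a \emph{conjecture}, not a theorem, and the paper offers no proof of it; there is therefore no ``paper's own proof'' to compare against, and your instinct to outline why the obvious construction fails rather than fabricate an argument is the right one. Your sketch of the randomized blow-up and the tension between suppressing $K_\ell$'s and killing large independent transversals is a fair diagnosis of why the direct approach stalls.

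One factual slip: you assert the conjecture remains open for all $\ell \geq 4$, but the paper explicitly notes (and it is true) that the cases $\ell = 3$ \emph{and} $\ell = 4$ are known. For $\ell = 4$ one compares $\R(3,m) = \Theta(m^2/\log m)$ (Shearer upper bound, Pontiveros--Griffiths--Morris / Bohman--Keevash lower bound) against the lower bound $\R(4,m) = \Omega\bigl(m^{5/2}/(\log m)^{2}\bigr)$ coming from the $K_4$-free process, which the paper records via the cited [AKS], [BK] theorem with $t = 4$; the ratio is $\Omega(m^{1/2}/\log m) \geq m^{c}$ for any fixed $c < 1/2$, so the conjecture holds there with no construction needed beyond what is already in the literature. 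The genuinely open cases begin at $\ell = 5$, where no lower bound for $\R(5,m)$ polynomially beats the known upper bound for $\R(4,m)$.
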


Conjecture~\ref{Ramsey conj} holds when $\ell=3, 4$ and is believed to be true for larger $\ell$. Many results in Ramsey-Tur\'an theory are conditional on Conjecture~\ref{Ramsey conj} or its analogues. We will use $\diamond$ to mark such results.\\

\noindent \begin{tabular}{p{68mm}l}
	$\bullet$ \textbf{$K_3$-free:} &\\
	\cite{Mantels}: $\rho\tau(K_3, n/2)=1/4$; & $\rho\tau\left(K_3, o(n)\right)=0$.
\end{tabular}\\\\

\noindent \begin{tabular}{p{68mm}l}
		$\bullet$ \textbf{$K_4$-free:} & \\
		\cite{Turans}: $\rho\tau(K_4, n/3)=1/3$; & \cite{Szemeredi}, \cite{BollErdos}: $\rho\tau\left(K_4, o(n)\right)=1/8$;\\ 
		\cite{Sudakov}: $\rho\tau(K_4, ne^{-\omega(n)\sqrt{\log n}})=0$. 
\end{tabular}\\\\
	
\noindent \begin{tabular}{p{68mm}l}
	$\bullet$ \textbf{$K_5$-free:} & \\
	\cite{Turans}: $\rho\tau(K_5, n/4)=3/8$;& \\
	\cite{ErdosSos1}: $\rho\tau(K_5, o(n))=1/4$; & \cite{BHS}: $\rho\tau(K_5, 2\sqrt{n \log n})=1/4$;\\
	\cite{BHS}: $\rho\tau(K_5, o(\sqrt{n \log n}))=0$.
\end{tabular}\\\\
	
\noindent \begin{tabular}{p{68mm}l}
	$\bullet$ \textbf{$K_6$-free:} & \\
	\cite{Turans}: $\rho\tau(K_6, n/5)=2/5$;& \cite{EHSS}: $\rho\tau(K_6, o(n))=2/7$;\\
	\cite{BHS}: $\rho\tau(K_6, n e^{-\omega(n)\sqrt{\log n}})=1/4$;&  
	\cite{Sudakov}: $\rho\tau(K_6, 2\sqrt{n \log n})=1/4$;\\
	\cite{EHSSS1}: $\rho\tau(K_6, o(\sqrt{n\log n}))\leq 1/6$;& \cite{Sudakov}: $\rho\tau(K_6, \sqrt{n} e^{-\omega(n)\sqrt{\log n}})=0$.
\end{tabular}\\\\
	
\noindent \begin{tabular}{p{68mm}l}
	$\bullet$ \textbf{$K_7$-free:} & \\
	\cite{Turans}: $\rho\tau(K_7, n/6)=5/12$; & \\
	\cite{ErdosSos1}: $\rho\tau(K_7, o(n))= 1/3$; &  \cite{BHS}: $\rho\tau(K_7, 2\sqrt{n \log n})= 1/3$;\\
	\cite{BHS}: $\rho\tau(K_7, o(\sqrt{n \log n}))=1/4$; & \cite{BHS}: $\rho\tau(K_7, \Q(4,n))= 1/4$;\\
	\cite{BHS}: $\rho\tau(K_7, o(\Q(4,n)))=0$.
\end{tabular}\\\\
	
\noindent \begin{tabular}{p{68mm}l}
	$\bullet$ \textbf{$K_8$-free:} &\\
	\cite{Turans}: $\rho\tau(K_8, n/7)=3/7$;& \cite{EHSS}: $\rho\tau(K_8, o(n))=7/20$;\\
	\cite{BHS}: $\rho\tau(K_8, n e^{-\omega(n)\sqrt{\log n}})=1/3$;& \cite{BHS}: $\rho\tau(K_8, 2\sqrt{n \log n})=1/3$;\\
	\cite{KKL}: $\rho\tau(K_8, o(\sqrt{n \log n}))=1/4$;& \cite{BHS}: $\rho\tau(K_8, \Q(4,n))=1/4$;\\
	\cite{BHS}: $\rho\tau(K_8, o(\Q(4,n)))\leq 3/16$;& \cite{BHS}: $\rho\tau(K_8, \Q(4,g(n)))=0$.
\end{tabular}\\\\
	
\noindent \begin{tabular}{p{68mm}l}
	$\bullet$ \textbf{$K_9$-free:} &\\
	\cite{Turans}: $\rho\tau(K_9, n/8)=7/16$;\\
	\cite{ErdosSos1}: $\rho\tau(K_9, o(n))=3/8$; &  \cite{BHS}: $\rho\tau(K_9, 2\sqrt{n \log n})= 3/8$;\\
	($\ast$)\footnotemark[1]: $\rho\tau(K_9, o(\sqrt{n \log n}))\leq 3/10$; &  \cite{BHS}: $\rho\tau(K_9, \Q(3,g(n)))= 1/4$;\\
	($\ast$): ($\diamond$) $\rho\tau(K_9, o(\Q(4,n)))=1/4$; & \cite{BHS}: ($\diamond$) $\rho\tau(K_9, \Q(5,n))=1/4$;\\
	\cite{BHS}: ($\diamond$) $\rho\tau(K_9, o(\Q(5,n)))=0$.
\end{tabular}\\\\

\footnotetext[1]{Results with ($\ast$) will be proved in this paper. We include them here for completeness.}

\noindent \begin{tabular}{p{68mm}l}
	$\bullet$ \textbf{$K_{10}$-free:} &\\
	\cite{Turans}: $\rho\tau(K_{10}, n/9)=4/9$;& \cite{EHSS}: $\rho\tau(K_{10}, o(n))=5/13$;\\
	\cite{BHS}: $\rho\tau(K_{10}, n e^{-\omega(n)\sqrt{\log n}})=3/8$; & \cite{BHS}: $\rho\tau(K_{10}, 2\sqrt{n \log n})= 3/8$;\\
	\cite{BHS}: $\rho\tau(K_{10}, o(\sqrt{n \log n}))=1/3$; & \cite{BHS}: $\rho\tau(K_{10}, \Q(4,n))=1/3$;\\
	($\ast$): ($\diamond$) $\rho\tau(K_{10}, o(\Q(4,n)))=1/4$;& \cite{BHS}: ($\diamond$) $\rho\tau(K_{10}, \Q(5,n))=1/4$;\\
	\cite{BHS}: ($\diamond$) $\rho\tau(K_{10}, o(\Q(5,n)))\leq 1/5$;& \cite{BHS}: ($\diamond$) $\rho\tau(K_{10}, \Q(5,g(n)))=0$.
\end{tabular}\\\\

\noindent \begin{tabular}{p{68mm}l}
	$\bullet$ \textbf{$K_{11}$-free:} &\\
	\cite{Turans}: $\rho\tau(K_{11}, n/10)=9/20$;\\
	\cite{ErdosSos1}: $\rho\tau(K_{11}, o(n))=2/5$; & \cite{BHS}: $\rho\tau(K_{11}, \sqrt{n \log n})=2/5$;\\
	\cite{BHS}: $\rho\tau(K_{11}, o(\sqrt{n \log n}))\leq 7/20$;\\
	\cite{BHS}: $\rho\tau(K_{11}, \sqrt{n} e^{-\omega(n)\sqrt{\log n}})=1/3$; & \cite{BHS}: $\rho\tau(K_{11}, \Q(4,n))=1/3$;\\
	($\ast$): ($\diamond$) $\rho\tau(K_{11}, o(\Q(4,n)))=1/4$; & \cite{BHS}: ($\diamond$) $\rho\tau(K_{11}, \Q(6,n))=1/4$;\\
	\cite{BHS}: ($\diamond$) $\rho\tau(K_{11}, o(\Q(6,n)))=0$.
\end{tabular}\\\\

\noindent \begin{tabular}{p{68mm}l}
	$\bullet$ \textbf{$K_{12}$-free:} &\\
	\cite{Turans}: $\rho\tau(K_{12}, n/11)=5/11$; & \cite{EHSS}: $\rho\tau(K_{12}, o(n))=13/32$;\\
	\cite{BHS}: $\rho\tau(K_{12}, n e^{-\omega(n)\sqrt{\log n}})=2/5$; & \cite{BHS}: $\rho\tau(K_{12}, \sqrt{n \log n})=2/5$;\\
	\cite{BHS}: $\rho\tau(K_{12}, o(\sqrt{n \log n}))\leq 8/22$;\\
	\cite{BHS}: $\rho\tau(K_{12}, \sqrt{n} e^{-\omega(n)\sqrt{\log n}})=1/3$; & \cite{BHS}: $\rho\tau(K_{12}, \Q(4,n))=1/3$;\\
	($\ast$): $\rho\tau(K_{12}, o(\Q(4,n)))\leq 4/13$; \\
	\cite{BHS}: ($\diamond$) $\rho\tau(K_{12},\Q(4,g(n)))=1/4$; & \cite{BHS}: ($\diamond$) $\rho\tau(K_{12}, \Q(6,n))=1/4$;\\
	\cite{BHS}: ($\diamond$) $\rho\tau(K_{12}, o(\Q(6,n)))\leq 5/24$;& \cite{BHS}: ($\diamond$) $\rho\tau(K_{12}, \Q(6,g(n)))=0$.
\end{tabular}\\\\

\noindent \begin{tabular}{p{68mm}l}
	$\bullet$ \textbf{$K_{13}$-free:} &\\
	\cite{Turans}: $\rho\tau(K_{13}, n/12)=11/24$;\\
	\cite{ErdosSos1}: $\rho\tau(K_{13}, o(n))=5/12$; & \cite{BHS}: $\rho\tau(K_{13}, \sqrt{n \log n})=5/12$;\\
	\cite{BHS}: $\rho\tau(K_{13}, o(\sqrt{n \log n}))=3/8$; & \cite{BHS}: $\rho\tau(K_{13}, \Q(4,n))=3/8$;\\
	\cite{BHS}: ($\diamond$) $\rho\tau(K_{13}, o(\Q(4,n)))=1/3$; & \cite{BHS}: ($\diamond$) $\rho\tau(K_{13}, \Q(5,n))=1/3$;\\
	($\ast$): $\rho\tau(K_{13}, o(\Q(5,n)))\leq 4/15$; & ($\ast$): ($\diamond$) $\rho\tau(K_{13}, o(\Q(5,n)))=1/4$;\\
	\cite{BHS}: ($\diamond$) $\rho\tau(K_{13}, \Q(7,n))=1/4$; & \cite{BHS}: ($\diamond$) $\rho\tau(K_{13}, o(\Q(7,n)))=0$.
\end{tabular}\\\\

Although different from the focus of this paper, it is worth mentioning that L\"{u}ders and Reiher \cite{LR} have studied the transition behaviors of cliques at $f(n)=n$ more accurately. For all $s\geq 2$, they proved that if $\delta$ is sufficiently small, then \[ \rho\tau(K_{2s-1},\delta n)=\frac{1}{2}\left(\frac{s-2}{s-1}+\delta\right) \text{\ \ \ and\ \ \ }\rho\tau(K_{2s},\delta n)=\frac{1}{2}\left(\frac{3s-5}{3s-2}+\delta-\delta^2\right).\] When $s=2$, let $G$ be a $K_3$-free graph on $n$ vertices with $\alpha(G) \leq \delta n$, then $e(G)\leq \frac{1}{2}\delta n^2$ since the neighborhood of every vertex is an independent set. Hence, $\rho\tau(K_3,o(n))=0$. Ajtai, Koml\'{o}s and Szemer\'{e}di~\cite{AKS} proved sharper results.

\subsection{Main Results}

Recall that $\Q(3,n)=\Theta(\sqrt{n \log n})$. Kim, Kim and Liu \cite{KKL} determined $\rho\tau(K_8,o(\sqrt{n \log n}))$, which is exactly $\rho\tau(K_8,o(\Q(3,n)))=1/4$.
We extend this result to larger cliques, thus improve the following upper bounds in \cite{BHS}: $\rho\tau(K_9, o(\Q(3,n)))\leq 5/16$, $\rho\tau(K_{10}, o(\Q(4,n)))\leq 5/18$, $\rho\tau(K_{11}, o(\Q(4,n)))\leq 3/10$ and $\rho\tau(K_{12}, o(\Q(4,n)))\leq 7/22$.

\begin{theorem}\label{theorem K_9}
	$\rho\tau(K_9,o(\Q(3,n)))\leq 3/10$.
\end{theorem}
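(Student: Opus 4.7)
The plan is to follow the template of Kim, Kim, and Liu for Theorem~\ref{theorem K_8}: apply Szemer\'edi's Regularity Lemma, use the small independence number together with dependent random choice (DRC) to extract cliques inside regular pairs, and close with a weighted Tur\'an-type optimisation on the reduced graph. Suppose for contradiction that $G$ is a $K_9$-free graph on $n$ vertices with $\alpha(G)=o(\Q(3,n))$ and $e(G)>(3/10+\eta)n^2$. Apply Regularity with a parameter $\varepsilon\ll \eta$, and let $R$ be the reduced graph on clusters $V_1,\dots,V_k$, retaining only pairs with density at least some $d\ll\eta$; the discarded pairs account for at most $O((\varepsilon+d)n^2)$ edges of $G$, so the weighted edge contribution of $R$ still exceeds $3/10+\eta/2$.

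The local analysis attaches to each cluster $V_i$ a \emph{clique parameter} $\omega_i\geq 3$: the largest $t$ for which $G[V_i]$ contains a DRC-robust $K_t$ (one whose set of common neighbours in each $R$-adjacent cluster is linear in $n$). The condition $\alpha(G[V_i])\leq \alpha(G)=o(\Q(3,|V_i|))$ ensures triangles in every linear subset of $V_i$, whence $\omega_i\geq 3$. The key local lemma, extending KKL, then says: for any heavy edge $V_iV_j$ of $R$ one can combine the internal cliques across the regular bipartite graph to produce a $K_{\omega_i+\omega_j}$ in $G[V_i\cup V_j]$, and for any heavy triangle $V_iV_jV_k$ a $K_{\omega_i+\omega_j+\omega_k}$ in $G[V_i\cup V_j\cup V_k]$. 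The $K_9$-freeness of $G$ therefore imposes $\omega_i+\omega_j\leq 8$ along each heavy edge and $\omega_i+\omega_j+\omega_k\leq 8$ along each heavy triangle, and density-weighted versions of these inequalities when pairwise densities are not close to $1$.

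The final step is a weighted Tur\'an optimisation: maximise
\[
\sum_{i<j}\frac{|V_i||V_j|}{n^2}\,d_{ij}\;+\;\sum_{i}\frac{|V_i|^2}{n^2}\,\sigma_i
\]
over vertex masses, cross densities $d_{ij}$, and internal densities $\sigma_i$, subject to the constraints above and to $\sigma_i\leq \rho\tau(K_{\omega_i+1},o(\Q(3,n)))+o(1)$ inherited from earlier Ramsey--Tur\'an results in the tables. Using in particular $\rho\tau(K_6,o(\sqrt{n\log n}))\leq 1/6$ and Sudakov's $\rho\tau(K_4,o(\sqrt{n\log n}))=0$, the optimum collapses to a two-block configuration: a block of relative mass $a$ of type-$5$ clusters (with $K_6$-free interior of density $1/6$) completely joined to a block of mass $1-a$ of type-$3$ clusters (with essentially empty interior), for which the density is $a(1-a)+a^2/6$, maximised at $a=3/5$ with value $3/10$, contradicting the assumed density.

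The main obstacle is the local lemma. For $K_8$ it is enough to find two triangles plus one connecting edge across three clusters, and edges come for free in any dense regular pair; for $K_9$ one needs three simultaneous triangles with compatible common neighbourhoods, and extracting them via DRC requires iteratively preserving linear set sizes so that the $\alpha$-condition still produces triangles inside cluster cores. Moreover, as the $(3/5,2/5)$ configuration shows, it is \emph{not} true that every heavy triangle of $R$ yields a $K_9$: the internal clique structure of a cluster itself restricts what can be assembled, so the density-weighted form of the $\omega$-constraints is genuinely needed, and is precisely what trades the naive bound of $1/4$ (which would come from a blanket triangle-freeness conclusion on $R$) for the sharper $3/10$.
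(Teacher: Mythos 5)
Your high-level skeleton (Regularity followed by a weighted Tur\'an-type optimisation on the cluster graph) matches the paper, and your conjectured extremal configuration --- the $(3/5,2/5)$ split giving $a(1-a)+a^2/6=3/10$ at $a=3/5$ --- is exactly the one described in the paper's remark after the main theorems. But the route you describe diverges from the paper's actual proof in ways that leave genuine gaps.

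First, the DRC machinery is not needed here and is, in fact, the wrong emphasis. The paper's forbidden pattern in $R$ is a \emph{heavy} triangle: one in which \emph{all three} cross-densities exceed $2/3+\gamma$. Under that hypothesis, a triangle found in $V_1^*$ automatically has linear common neighbourhood in $V_2^*$ and $V_3^*$ directly from the minimum crossing degree ($3(2/3+\gamma/2)-2=3\gamma/2$, by inclusion--exclusion), so no dependent random choice step is required; the iteration through $V_2'$ and $V_3'$ uses only the Slicing Lemma and the $\alpha(G)\le\delta\Q(3,n)$ condition via Theorem~\ref{theorem indep set}. DRC only becomes necessary in the later $\Q(4,\cdot)$ and $\Q(5,\cdot)$ arguments, where the hypothesis is that merely \emph{one} edge of the triangle is dense (the ``chubby'' condition), and the common-neighbourhood boost has to be manufactured.

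Second, your $\omega_i$/$\sigma_i$ bookkeeping introduces problems the paper avoids. After Regularity the total number of intra-cluster edges is $O(n^2/m)$, so the term $\sum_i\frac{|V_i|^2}{n^2}\sigma_i$ is $o(1)$ and contributes nothing to the objective; the paper simply folds all such edges into the error bound~(\ref{nonedge of R}). Worse, the constraint $\sigma_i\le\rho\tau(K_{\omega_i+1},o(\Q(3,n)))+o(1)$ is circular when $\omega_i$ can be as large as $8$, because that is precisely the quantity being bounded. And since $\alpha(G)=o(\Q(3,n))$ forces $\omega_i\ge 3$ for \emph{every} cluster, your triangle constraint $\omega_i+\omega_j+\omega_k\le 8$ collapses to ``no heavy triangle,'' rendering the $\omega_i$ variables inert.

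Third --- and this is the real gap --- you assert without proof that the constrained optimisation ``collapses to a two-block configuration.'' Establishing that a $K_4$-free edge-weighted graph with weights in $[0,1]$ and no $2/3$-heavy triangle has total weight at most $(3/10+o(1))n^2$ is exactly Theorem~\ref{K_4-free-heavy}(i), a weighted Tur\'an theorem proved in~\cite{BL} and cited as an external ingredient in the paper. Your heuristic identifies the (conjectured) extremiser but does not establish extremality; without invoking or reproving that inequality, the argument stops short of the stated bound. Finally, $\rho\tau(K_6,o(\sqrt{n\log n}))\le 1/6$ plays no role in the paper's upper-bound proof --- it appears only in the remark about matching lower-bound constructions --- so importing it as a hypothesis is both unnecessary and potentially misleading about what the upper bound depends on.
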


\begin{theorem}\label{theorem K_9,10,11 density}
	$\rho\tau(K_t,o(\Q(4,n)))\leq 1/4$ for $9\leq t\leq 11$.
\end{theorem}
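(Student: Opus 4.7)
The plan is to prove both directions separately. For the \emph{lower bound}, by monotonicity of $K_t$-freeness it suffices to treat $t = 9$. Starting from the \cite{BHS} construction witnessing $\rho\tau(K_9, \Q(3, g(n))) = 1/4$, where $g(n) = n e^{-\omega(n)\sqrt{\log n}}$, I take $\omega(n) = \tfrac{1}{2}\sqrt{\log n}$, giving $g(n) = \sqrt{n}$. By Shearer's bound $\Q(3, g(n)) = O(n^{1/4}\sqrt{\log n})$, while the Ajtai--Koml\'{o}s--Szemer\'{e}di upper bound on $R(4,m)$ gives $\Q(4,n) = \Omega(n^{1/3}\log^{2/3} n)$; hence $\Q(3, g(n)) = o(\Q(4,n))$. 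The BHS construction thus yields a $K_9$-free graph on $n$ vertices with $\alpha = o(\Q(4,n))$ and $(\tfrac14 - o(1))n^2$ edges; since $K_9$-freeness implies $K_t$-freeness for $t \ge 9$, this establishes $\rho\tau(K_t, o(\Q(4,n))) \ge 1/4$ for all $t \in \{9,10,11\}$.

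For the \emph{upper bound} it suffices to prove $\rho\tau(K_{11}, o(\Q(4,n))) \le 1/4$, as $K_{11}$-freeness is the weakest among our three hypotheses. Assume for contradiction that $G$ is an $n$-vertex $K_{11}$-free graph with $\alpha(G) = o(\Q(4,n))$ and $e(G) \ge (\tfrac14 + \varepsilon)n^2$. I apply Szemer\'{e}di's Regularity Lemma with parameters $\eta, \delta$ chosen small in terms of $\varepsilon$, and form the reduced graph $R$ on the parts $V_1, \dots, V_k$ of common size $m$ by retaining pairs of density at least $\delta$. A standard counting yields $e(R) > (\tfrac14 + \tfrac{\varepsilon}{2})k^2$, so Mantel's theorem together with supersaturation produce a triangle $V_a V_b V_c$ in $R$ whose three pairs are all $\eta$-regular of density at least $\delta$.

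I build a $K_{11}$ in $G$ distributed $(4,4,3)$ across $V_a, V_b, V_c$ using two rounds of dependent random choice. First, within the set of vertices of $V_c$ that are typical for both $(V_c, V_a)$ and $(V_c, V_b)$ (those with at least $(\delta - \eta)m$ neighbors in each), dependent random choice extracts $U \subseteq V_c$ of size $\Omega(m)$ such that every triple in $U$ has common neighborhood of size $\Omega(m)$ in both $V_a$ and $V_b$. Since $\alpha(G[U]) \le \alpha(G) = o(\Q(4, |U|))$, the set $U$ contains a $K_4$, and a fortiori a triangle $c_1 c_2 c_3$. Let $A = V_a \cap \bigcap_i N(c_i)$ and $B = V_b \cap \bigcap_i N(c_i)$; each has size $\Omega(m)$, and $(A, B)$ inherits density at least $\delta - O(\eta)$ from $(V_a, V_b)$. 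A second round of dependent random choice on the bipartite graph $(A, B)$ produces $S \subseteq B$ of size $\Omega(m)$ such that every $4$-subset of $S$ has $\Omega(m)$ common neighbors in $A$. Since $\alpha(G[S]) = o(\Q(4, |S|))$, there is a $K_4$ $\{b_1, \dots, b_4\} \subseteq S$; and $A \cap \bigcap_i N(b_i)$, still of size $\Omega(m)$, contains a $K_4$ $\{a_1, \dots, a_4\}$ by the same reasoning. The eleven vertices $\{a_1, \dots, a_4, b_1, \dots, b_4, c_1, c_2, c_3\}$ induce a $K_{11}$ in $G$, contradicting $K_{11}$-freeness.

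The main obstacle is parameter bookkeeping: each extraction of a $K_4$ invokes $\alpha(G[X]) \le \alpha(G) = o(\Q(4,n)) = o(\Q(4, |X|))$, which is only useful when $|X| = \Omega(n)$ so that $\Q(4, |X|) = \Theta(\Q(4, n))$. Consequently each of the five intermediate sets $U, A, B, S, A \cap \bigcap_i N(b_i)$ must maintain linear size, forcing the regularity parameters $\eta, \delta$, the dependent random choice exponents, and the neighborhood-size thresholds to be chosen in concert. A further delicate point is that the three-fold intersection defining $A$ and $B$ could significantly thin the pair density, so the triangle $c_1 c_2 c_3$ must be drawn from vertices of $V_c$ typical for both regular pairs -- a property guaranteed precisely by the first dependent random choice extraction.
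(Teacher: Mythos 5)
Your proposal has gaps in both directions.

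\textbf{Lower bound.} You invoke the BHS result $\rho\tau(K_9, \Q(3,g(n)))=1/4$ with the substitution $\omega(n)=\tfrac12\sqrt{\log n}$, but the function $g(n)=ne^{-\omega(n)\sqrt{\log n}}$ in that result is defined for $\omega(n)$ tending to infinity \emph{arbitrarily slowly}, so that $g(n)=n^{1-o(1)}$. For that range one has $\Q(3,g(n))=\Theta(\sqrt{g(n)\log g(n)})=n^{1/2-o(1)}$, whereas $\Q(4,n)=O(n^{2/5+o(1)})$. So $\Q(3,g(n))$ is much \emph{larger} than $\Q(4,n)$, and the BHS construction's independence number is not $o(\Q(4,n))$. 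Substituting a rapidly growing $\omega$ to force $g(n)=\sqrt{n}$ puts you outside the regime of the cited theorem; the arithmetic showing $n^{1/4}\sqrt{\log n}=o(n^{1/3}(\log n)^{2/3})$ is fine, but the theorem you are quoting does not apply there. The paper's lower bound instead joins two disjoint copies of a $K_5$-free graph on $n/2$ vertices with $o(n^2)$ edges and independence number $o(\Q(4,n))$; this yields a $K_9$-free graph with $\sim n^2/4$ edges directly.

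\textbf{Upper bound.} You claim that \emph{any} triangle in the reduced graph with densities $\geq\delta$ forces a $K_{11}$, so that $R$ is triangle-free and Mantel finishes. This is too strong and your dependent-random-choice construction does not deliver it. Lemma~\ref{lemma 3.1 4-partite generalized} produces a set $S$ of \emph{sublinear} size $\tfrac12|B|^{1-c'}$, not size $\Omega(m)$ as you write. To extract a $K_4$ from $S$ you need $\alpha(G)<\Q(4,|S|)$, but with $|S|=O(n^{1-c'})$ one only knows $\Q(4,|S|)\geq\Omega(|S|^{1/3})=\Omega(n^{(1-c')/3})$, while $\alpha(G)\leq\delta\Q(4,n)$ could be as large as $\Theta(n^{2/5})$; since $(1-c')/3<2/5$ for every $c'>0$, the inequality cannot be ensured. (Finding a triangle in a sublinear set is fine because $\R(3,m)\leq m^2$; finding a $K_4$ is not, precisely because of the gap between the known lower and upper bounds on $\Q(4,\cdot)$.) Consequently your $(4,4,3)$ embedding collapses to a $(3,3,4)$ one: you can only guarantee $K_{10}$ from a low-density triangle, which is no contradiction in a $K_{11}$-free graph. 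The paper therefore does not attempt to show $R$ is triangle-free; it shows $R$ is $K_4$-free (Lemma~\ref{lemma to derive K_4 free}) and that no triangle is \emph{chubby}, i.e.\ no triangle has a pair of density $\geq 3/4+\gamma$. The $3/4$ threshold is used exactly so that a $K_4$ found in a \emph{linear}-size side still has nonempty (linear) common neighborhood, avoiding the second DRC round. Non-chubby triangles are then handled via the weighted Tur\'an bound (Theorem~\ref{cliqueweighting}): edges lying in triangles of $R$ contribute density at most $3/4+\gamma$, edges not in triangles up to $1$, and the weighted sum is at most $m^2/4$. Your Mantel step cannot be rescued without this weighting.
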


\begin{theorem}\label{theorem K_12 density}
	$\rho\tau(K_{12},o(\Q(4,n)))\leq 4/13$.
\end{theorem}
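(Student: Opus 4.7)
The plan is to apply the regularity plus dependent random choice (DRC) framework used in the preceding theorems and reduce the problem to a weighted Tur\'an-type inequality on the reduced graph.

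Fix $\delta>0$ and suppose for contradiction that $G$ is an $n$-vertex $K_{12}$-free graph with $\alpha(G)=o(\Q(4,n))$ and $e(G)\geq (4/13+\delta)n^2$. Apply Szemer\'edi's Regularity Lemma with $\epsilon\ll d\ll \delta$ to obtain an equitable partition $V_1,\ldots,V_k$ and a reduced graph $R$ on $[k]$, where $ij\in E(R)$ iff $(V_i,V_j)$ is $\epsilon$-regular of density at least $d$; a standard cleanup gives $e(R)\geq (4/13+\delta/2)k^2$. The hypothesis forces every linear-sized subset of $G$ (and in particular every $V_i$, and every sufficiently large common neighborhood inside $G$) to contain a $K_4$. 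Applying DRC along each edge $ij\in E(R)$, one finds $K_4$'s in $V_i$ whose common neighborhoods in $V_j$ are large enough to themselves contain $K_4$'s, so that each edge of $R$ lifts to a $K_8$-blow-up in $G$ and, more generally, each clique of $R$ lifts to a clique-blow-up inside $G$ through the relevant regular pairs.

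Next, assign each $i\in V(R)$ a weight $w_i$ measuring the largest clique-blow-up that $V_i$ robustly supports with its $R$-neighbors (so $w_i\geq 4$). The $K_{12}$-freeness of $G$ then translates into constraints forbidding certain weighted-clique configurations in $R$ (for example, any edge with $w_i+w_j\geq 12$ and any triangle with $w_i+w_j+w_\ell\geq 12$). Solving the resulting weighted Tur\'an-type optimization, exactly the analog of the one carried out in the proof of Theorem~\ref{theorem K_9,10,11 density} but with the critical clique-weight threshold set to $12$ rather than $9$, $10$, or $11$, bounds the edge density of $R$ by $4/13$, contradicting $e(R)\geq (4/13+\delta/2)k^2$ and completing the argument.

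The main obstacle is the weighted Tur\'an computation itself. For $9\leq t\leq 11$ the corresponding LP evaluates cleanly to $1/4$ and is matched by an explicit blow-up construction, yielding the sharp statement of Theorem~\ref{theorem K_9,10,11 density}. For $t=12$ the same LP produces the rational optimum $4/13$, which does not arise from any single natural blow-up construction \textemdash{} this is precisely why the theorem can only be stated as an upper bound rather than an equality. Verifying the value $4/13$ requires a careful enumeration of the admissible weight profiles on $R$ (including configurations where some parts support $K_5$- or $K_6$-blow-ups in conjunction with their neighbors), together with an LP analysis showing that no such configuration exceeds edge density $4/13$.
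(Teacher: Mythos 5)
Your proposal captures the broad shell — regularity, a reduced graph $R$, some weighted Tur\'an-type bound on $R$ — but the details diverge from what actually works, and the gaps are substantive.

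First, the relevant weight is not a \emph{vertex} weight $w_i$ recording what clique each $V_i$ ``robustly supports''; it is an \emph{edge} weight on $R$ driven by the densities $d_G(V_i,V_j)$. The operative combinatorial condition is about \emph{heavy triangles}: a triangle $ijk$ in $R$ is heavy if \emph{every} pair $i'j'$ has $d_G(V_{i'},V_{j'})\geq 3/4+\gamma$. The core claim for $K_{12}$ is that $R$ contains no heavy triangle, and its proof is elementary: since $\alpha(G)=o(\Q(4,n))$, the low-independence hypothesis directly yields a $K_4$ inside $V_1^*$; pair-density $\geq 3/4+\gamma$ then makes the common neighborhoods of that $K_4$ in $V_2^*,V_3^*$ linear-sized, where you again find a $K_4$, and then a third $K_4$ in the iterated common neighborhood, producing a $K_{12}$. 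No dependent random choice is needed here. DRC is what rescues the $K_9$–$K_{11}$ case, where the forbidden configuration is a ``chubby'' triangle (only \emph{one} pair dense $\geq 3/4+\gamma$), so one cannot start by finding a $K_4$ in $V_1^*$ with useful neighborhoods directly; the two cases use different forbidden triangle types for a reason.

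Second, the weighted Tur\'an step is not ``exactly the analog'' of the $9\leq t\leq 11$ computation with a threshold shifted to $12$. For $K_9$–$K_{11}$ the authors invoke the standard clique edge-weighting bound (Theorem~\ref{cliqueweighting}): since $R$ is $K_4$-free and no triangle is chubby, edges in triangles contribute density at most $3/4+\gamma$ and triangle-free edges at most $1$, and the standard weighting caps this at roughly $m^2/4$. For $K_{12}$ one only knows no triangle is \emph{heavy}, which is a weaker restriction, and the tool that handles it is Theorem~\ref{K_4-free-heavy}(ii): for a $K_4$-free graph with no $3/4$-heavy triangle and a $[0,1]$-edge-weighting, the total weight is at most $(4/13+o(1))n^2$. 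One defines $w(ij)=3/4$ when $d_G(V_i,V_j)\in(3/4,3/4+\gamma)$ and $w(ij)=d_G(V_i,V_j)$ otherwise, checks that the no-heavy-triangle claim guarantees no $3/4$-heavy triangle under this $w$, applies the theorem, and converts back to $e(G)$. Your plan gestures at ``an LP analysis showing that no such configuration exceeds $4/13$'' but that analysis \emph{is} the missing theorem; without citing or reproving Theorem~\ref{K_4-free-heavy}(ii), the argument is not closed. In short: you are missing the heavy-triangle claim (and its non-DRC proof), the correct edge-weighting setup, and the specific weighted Tur\'an inequality that produces $4/13$.
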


\begin{theorem}\label{K_13 density}
	$\rho\tau(K_{13},o(\Q(5,n)))\leq 4/15$.
\end{theorem}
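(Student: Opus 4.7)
The plan is to combine the Szemer\'edi Regularity Lemma, the dependent random choice method, and a weighted Tur\'an-type estimate. Apply the Regularity Lemma with parameters $\epsilon \ll d \ll 1$ to obtain a partition of $V(G)$ into $k$ clusters $V_1, \ldots, V_k$ of size roughly $n/k$. Define the reduced graph $R$ on $[k]$ with edge weights $d_{ij}=d(V_i,V_j)$ for $\epsilon$-regular pairs of density at least $d$, so that
\[ e(G) \le \left(\frac{n}{k}\right)^2 \sum_{ij\in E(R)} d_{ij} + o(n^2). \]

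From $\alpha(G) \le \delta\,\Q(5,n)$ with $\delta$ sufficiently small, any set $W \subseteq V(G)$ of linear size $|W| \ge cn$ contains a $K_5$ (indeed, many $K_5$'s). This $K_5$-richness combined with regularity lets us embed cliques of $G$ into blow-up structures on $R$: given a clique $K_r$ in $R$ realised by clusters $V_{i_1}, \ldots, V_{i_r}$, we pick $K_5$'s or smaller cliques inside some clusters and single vertices inside the others, maintaining mutual adjacency via iterated common-neighbourhood estimates. Dependent random choice is invoked to ensure the common neighbourhoods remain of linear size as the embedding proceeds, so that the $K_5$-richness can still be applied inside them.

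To avoid $G$ containing a $K_{13}$ we must balance the richness we extract from clusters against the clique number of $R$, recording at each step how the edge-density threshold enters the embedding. The outcome is a family of forbidden heavy substructures in $R$ whose absence can be expressed as a weighted Tur\'an-type condition. Packaging this into a Lagrangian on $R$ constrained by the forbidden configurations, and optimising it, yields the constant $4/15$.

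The main difficulty is the weighted Tur\'an step. Without Conjecture~\ref{Ramsey conj}, we cannot control $\Q(5,cn)/\Q(5,n)$ tightly, so we cannot freely iterate the embedding to plant a $K_5$ inside every cluster of a clique in $R$; such iteration would force $R$ to be triangle-free and give the stronger bound $1/4$ (the $(\diamond)$-result recorded in the table). Instead, the proof combines partial iterations, planting $K_5$'s in some clusters, smaller cliques in others, and single vertices elsewhere, and couples this with careful bookkeeping of the densities $d_{ij}$ that appear. Obtaining the extremal weighting which matches the constant $4/15$ is the crux of the argument.
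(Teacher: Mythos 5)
Your high-level framework matches the paper: Regularity Lemma to get a cluster graph $R$, dependent random choice to embed cliques in dense regular tuples, and a weighted Tur\'an estimate to conclude. But the crux is hand-waved, and the part you do describe misidentifies where the difficulty lies.

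First, the role of Conjecture~\ref{Ramsey conj}. You attribute the weaker bound to being unable to control $\Q(5,cn)/\Q(5,n)$ during iterated embeddings. In the paper the conjecture enters only once, through Lemma~\ref{lemma to derive K_4 free}: without it one can only invoke the $p=3$ case (since $13 = 3\cdot 4 + 1$) and conclude $R$ is $K_5$-free, whereas the conjecture for $\ell = 5$ unlocks $p=4$ and forces $R$ to be $K_4$-free. Your ``such iteration would force $R$ to be triangle-free'' is wrong even in the conditional case; the conditional argument (Theorem~\ref{K_13 with assumption density}) still works with $K_4$-free $R$ and chubby triangles. So the dichotomy you describe is not the right one.

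Second, you leave the weighted Tur\'an step entirely unspecified, calling it ``the crux'' and invoking a Lagrangian optimisation. The actual argument proves two structural claims: no $K_4$ in $R$ is \emph{chubby} (some pair has density $\geq 2/3 + \gamma$) and no triangle in $R$ is \emph{chubby} (some pair has density $\geq 4/5 + \gamma$). These are proved by dependent random choice chains finding $K_3 + K_3 + K_3 + K_5 = K_{14}$ in the $K_4$ case, and a $K_3 + K_5 + K_5 = K_{13}$ in the triangle case. The pair of claims together says exactly that the edge-weighting $w(e) = 2/3$ if $e$ is in a $K_4$, $4/5$ if $e$ is in a triangle but not a $K_4$, $1$ otherwise, upper-bounds the cluster densities. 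Corollary~\ref{K_5-free_chubby} (from \cite{BL}) then gives $w(R) \leq (4/15 + o(1))m^2$ for $K_5$-free $R$, producing the constant. Without naming these thresholds and the specific weighted result, the proposal does not contain the proof — it only names the ingredients.
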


If Conjecture~\ref{Ramsey conj} holds for $\ell=5$, then we have a better result for the $K_{13}$-free case, which improves $\rho\tau(K_{13}, o(\Q(5,n)))\leq 7/24$ given in \cite{BHS} under the same assumption.

\begin{theorem}\label{K_13 with assumption density}
	If Conjecture~\ref{Ramsey conj} is true for $\ell=5$, then $\rho\tau(K_{13},o(\Q(5,n)))=1/4$.
\end{theorem}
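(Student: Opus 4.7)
The plan is to establish matching upper and lower bounds of $1/4$, with Conjecture~\ref{Ramsey conj} for $\ell=5$ used on the lower-bound side to certify that the extremal construction really has $\alpha$ strictly inside $o(\Q(5,n))$.

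For the upper bound, let $G$ be a $K_{13}$-free $n$-vertex graph with $\alpha(G) \le \delta\,\Q(5,n)$ and suppose for contradiction that $e(G) > (1/4 + \gamma)n^2$. I would apply Szemer\'edi's Regularity Lemma to obtain an $\epsilon$-regular partition $V_1,\dots,V_k$ and reduced graph $R$ whose edge density exceeds $1/2 + \gamma'$. By Tur\'an's theorem $R$ contains a triangle, say on $V_1,V_2,V_3$, with pairwise $\epsilon$-regular pairs of density $\ge d$. Iterated dependent random choice now produces a $K_5$ $T_1 \subseteq V_1$ whose common $V_2$- and $V_3$-neighborhoods $W_2,W_3$ are of linear size (this requires $\alpha(G[U]) \le \delta\,\Q(5,n) < \Q(5,|U|)$ for every $U$ of linear size, which holds once $\delta$ is small enough given the polynomial growth of $\Q(5,\cdot)$); then a $K_5$ $T_2 \subseteq W_2$ whose common $W_3$-neighborhood $W_3'$ is still linear; and finally a $K_5$ $T_3 \subseteq W_3'$. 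By construction $T_1 \cup T_2 \cup T_3$ forms a $K_{15}$ in $G$, contradicting $K_{13}$-freeness. Letting $\gamma \to 0$ and then $\delta \to 0$ yields $\rho\tau(K_{13},o(\Q(5,n))) \le 1/4$.

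For the lower bound, I would transfer the construction of \cite{BHS} witnessing $\rho\tau(K_{13},\Q(7,n)) = 1/4$: two parts of size $n/2$ joined by a complete bipartite graph, with an extremal $K_7$-free Ramsey graph ($\alpha = \Q(7,n/2)$) placed on each side. This has density tending to $1/4$, maximum clique $6+6 = 12$, and $\alpha(G) = O(\Q(7,n))$. Under Conjecture~\ref{Ramsey conj} for $\ell=5$, $\Q(5,n)$ is polynomially separated from $\Q(4,n)$; combined with the monotonicity $\Q(7,n) \le \Q(6,n) \le \Q(5,n)$ and the known upper and lower bounds on the off-diagonal Ramsey numbers $\R(t,m)$ for $t \in \{6,7\}$, this forces $\Q(7,n) = o(\Q(5,n))$, placing the construction inside the class $\alpha = o(\Q(5,n))$ and delivering $\rho\tau(K_{13},o(\Q(5,n))) \ge 1/4$.

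The main obstacle is bookkeeping through the three iterations of dependent random choice: every intermediate subset must remain linear in $n$ so that the $\alpha$-hypothesis keeps forcing $K_5$'s, and the DRC exponents must be tuned simultaneously with the regularity parameters $\epsilon, d_0, d$ so that the three $K_5$'s are pairwise completely joined via common neighborhoods rather than merely through dense regular pairs. I expect the weighted Tur\'an-type lemmas advertised in the abstract to substitute for the naive ``$K_3$ in $R$'' step if it forfeits too much density in intermediate stages. A secondary subtlety is that the $\ell=5$ case of the conjecture only directly separates $\Q(4,n)$ from $\Q(5,n)$, so the step $\Q(7,n) = o(\Q(5,n))$ in the lower-bound transfer has to be squeezed out of the unconditional Ramsey-hierarchy estimates rather than from the conjecture itself.
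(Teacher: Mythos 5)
Your upper bound argument has a fundamental gap in the dependent random choice step. Lemma~\ref{lemma 3.1 4-partite generalized} produces a set $S \subseteq V_1$ of size $\Theta(n^{1-c})$ for some fixed $c>0$, which is \emph{sublinear} in $n$. Because $\Q(5,\cdot)$ grows polynomially, $\Q(5,n^{1-c})$ is smaller than $\Q(5,n)$ by a polynomial factor, so for any fixed $\delta>0$ and $n$ large, $\delta\,\Q(5,n) > \Q(5,n^{1-c})$. Thus the hypothesis $\alpha(G)\leq\delta\Q(5,n)$ does \emph{not} force a $K_5$ inside $S$. Your parenthetical ``$\alpha(G[U]) \le \delta\,\Q(5,n) < \Q(5,|U|)$ for every $U$ of linear size'' is correct for linear $U$ but simply does not apply to the DRC output set. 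Only in the linear common neighborhoods can one force a $K_5$, which is precisely why the paper's chain of cliques has sizes $K_4, K_4, K_5$ (or $K_3,K_5,K_5$ unconditionally), not $K_5,K_5,K_5$. A telling symptom is that your upper bound never invokes Conjecture~\ref{Ramsey conj}, yet Theorem~\ref{K_13 density} shows $4/15 > 1/4$ is the best these techniques achieve unconditionally, so an unconditional $1/4$ argument of this type cannot be right.

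The paper's actual route is quite different. Conjecture~\ref{Ramsey conj} for $\ell=5$ gives a polynomial gap $\Q(5,n)=o(\Q(4,n))$. This is used twice: via Lemma~\ref{lemma to derive K_4 free} with $p=4$ it makes the cluster graph $R$ \emph{$K_4$-free}, and it also makes $\delta\Q(5,n) < \Q(4,|S|)$ achievable so that a $K_4$ can be found in the sublinear DRC set. The no-chubby-triangle claim then produces $K_4$ (in $S$) $\cup$ $K_4$ (in $V_2'$) $\cup$ $K_5$ (in $N(K_4,V_3')$) $= K_{13}$, but only when some edge of the cluster triangle has density $\geq 3/4+\gamma$. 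Non-chubby triangles in $R$ are permitted and are handled not by embedding $K_{13}$ but by the standard clique edge-weighting Theorem~\ref{cliqueweighting} applied to the $K_4$-free graph $R$. Note that ``a triangle in $R$ forces a $K_{13}$'' is actually false: once $e(G)$ nudges slightly above $n^2/4$, the reduced graph does acquire triangles, but these are non-chubby and do not yield $K_{13}$.

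On the lower bound, the construction (two $K_7$-free parts fully joined) is the right one, but the step $\Q(7,n)=o(\Q(5,n))$ cannot be squeezed out of Conjecture~\ref{Ramsey conj} for $\ell=5$, which only separates $\Q(4,n)$ from $\Q(5,n)$. The known unconditional bounds place both $\Q(7,n)$ and $\Q(5,n)$ potentially at the same polynomial order $n^{1/4}$ (with overlapping polylog windows), so monotonicity alone does not give $o(\cdot)$. The paper instead cites a first-moment argument for the existence of a suitable sparse $K_7$-free graph with $\alpha=o(\Q(5,n))$, and does not route through the $\ell=5$ conjecture in the way you propose.
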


\noindent\textbf{Remark. }Assuming Conjecture~\ref{Ramsey conj} holds, we explain below that the upper bounds given in Theorems~\ref{theorem K_9,10,11 density} and~\ref{K_13 with assumption density} are tight, which can also be seen by the results listed after Conjecture~\ref{Ramsey conj} in Section~\ref{history}. Moreover, we conjecture that the bounds given in Theorems~\ref{theorem K_9} and~\ref{theorem K_12 density} are best possible.

\medskip
\noindent $\bullet$ \textit{Theorem~\ref{theorem K_9,10,11 density}:} Let $H$ be a $K_5$-free graph on $n/2$ vertices with independence number $\Q(5,n/2)$. The existence of such graphs is guaranteed by Ramsey's theorem and $e(H)=o(n^2)$ by~\cite{BHS}. If Conjecture~\ref{Ramsey conj} holds for $\ell=5$, then $\alpha(H)=\Q(5,n/2)=o(\Q(4,n))$. Let $G$ be obtained from the union of two vertex-disjoint copies of $H$, say $A$ and $B$, by joining every vertex in $A$ to every vertex in $B$. Then, $G$ is $K_9$-free, thus $K_{10}$-free and $K_{11}$-free, with $n^2/4+o(n^2)$ edges and $\alpha(G)=o(\Q(4,n))$. 

\medskip
\noindent $\bullet$  \textit{Theorem~\ref{K_13 with assumption density}:} Let $H$ be a $K_7$-free graph on $n/2$ vertices with independence number $\Q(7,n/2)$. Similarly, the existence of such graphs is guaranteed by Ramsey's theorem and $e(H)=o(n^2)$ by~\cite{BHS}. If Conjecture~\ref{Ramsey conj} holds for $\ell=6$ or for $\ell=7$, then $\alpha(H)=\Q(7,n/2)=o(\Q(5,n))$. Let $G$ be obtained from the union of two vertex-disjoint copies of $H$, say $A$ and $B$, by joining every vertex in $A$ to every vertex in $B$. Then, $G$ is $K_{13}$-free with $n^2/4+o(n^2)$ edges and $\alpha(G)=o(\Q(5,n))$. 

\medskip
\noindent $\bullet$ \textit{Theorem~\ref{theorem K_9}:} We conjecture that $\rho\tau(K_6, o(\sqrt{n\log n}))=1/6$. If it was true, then there exists a $K_6$-free graph $H_1$ on $3n/5$ vertices with independence number $o(\Q(3,n))$ and with $\frac{1}{6}(\frac{3n}{5})^2=3n^2/50$ edges. Let $H_2$ be a $K_4$-free graph on $2n/5$ vertices with independence number $o(\Q(3,n))$ and with $o(n^2)$ edges, the existence of such graphs could be proved with the first moment method. Let $G$ be obtained from the vertex-disjoint union of $H_1$ and $H_2$ by joining every vertex in $H_1$ to every vertex in $H_2$. Then, $G$ is $K_9$-free with $e(G)\leq 3n^2/50+6n^2/25+o(n^2)=3n^2/10+o(n^2)$ and $\alpha(G)=o(\Q(3,n))$.

\medskip
\noindent $\bullet$ \textit{Theorem~\ref{theorem K_12 density}:} We conjecture that $\rho\tau(K_8, o(\Q(4,n)))=3/16$. If it was true, then there exists a $K_8$-free graph $H_1$ on $8n/13$ vertices with independence number $o(\Q(4,n))$ and with $\frac{3}{16}(\frac{8n}{13})^2=12n^2/169$ edges. Let $H_2$ be a $K_5$-free graph on $5n/13$ vertices with independence number $o(\Q(4,n))$ and with $o(n^2)$ edges, the existence of such graphs could be proved with the first moment method. Let $G$ be obtained from the vertex-disjoint union of $H_1$ and $H_2$ by joining every vertex in $H_1$ to every vertex in $H_2$. Then, $G$ is $K_{12}$-free with $e(G)\leq 12n^2/169+40n^2/169+o(n^2)=4n^2/13+o(n^2)$ and $\alpha(G)=o(\Q(4,n))$.

\section{Preliminaries}

\subsection{Definitions and Notation}

In this paper, we will use standard definitions and notation. All graphs considered are simple undirected graphs. Given disjoint sets $A, B \subseteq V(G)$, denote by $N(A,B)$ the common neighborhood of $A$ in $B$. In the case when $A=\{v\}\subseteq V(G)$, we will write $N(v,B)$ for the set of neighbors of $v$ in $B$ and let $d(v,B)\coloneqq|N(v,B)|$. Given a graph $G$ and $U \subseteq V(G)$, the induced subgraph $G[U]$ is the graph whose vertex set is $U$ and whose edge set is spanned by vertices in $U$. If $G[V_1, \ldots, V_p]$ is the induced subgraph of $G$ on the partition of vertices $V_1 \cup \ldots \cup V_p \subseteq G$ where the edges are whose endpoints are in $V_i,V_j$ with $i\neq j$, then $\mcd(V_1,\ldots,V_p) \coloneqq  \min_{\{i,j\} \in \binom{[p]}{2}} \left\{ \min_{v \in V_i} d(v,V_j) \right\}$ is the minimum crossing degree of $G$ with respect to the partition $V_1 \cup \ldots \cup V_p$. We may omit floors and ceilings when they are not essential.


\subsection{Tools}

The following theorem is a corollary of Shearer's bound on $\R(3,n)$.

\begin{theorem}[\cite{KKL}]\label{theorem indep set} 
There exists $k_0 \in \N$ such that for all $k \geq k_0$, every graph with at least $2k^2/\log k$ vertices contains either a triangle or an independent set of size $k$.
\end{theorem}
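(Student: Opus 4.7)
The plan is to invoke Shearer's upper bound on the off-diagonal Ramsey number $\R(3,k)$, which is already recalled in the introduction of the paper. By definition, $\R(3,k)$ is the least integer $n$ such that every $n$-vertex graph contains either a triangle or an independent set of size $k$; Shearer's theorem asserts that
\[
\R(3,k) \leq (1+o(1))\,\frac{k^2}{\log k}
\]
as $k\to\infty$.

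From this, the statement follows almost immediately. Since the $o(1)$ correction tends to zero, there exists a threshold $k_0 \in \N$ such that $1 + o(1) \leq 2$ for all $k \geq k_0$, and hence $\R(3,k) \leq 2k^2/\log k$ whenever $k \geq k_0$. For any graph $G$ with $|V(G)| \geq 2k^2/\log k \geq \R(3,k)$, the definition of the Ramsey number forces $G$ to contain either a triangle or an independent set of size $k$, which is exactly what we want.

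There is no substantive obstacle: the theorem is really a quantitative repackaging of Shearer's bound, with the implicit multiplicative constant in the $o(1)$ term absorbed into the clean factor $2$. The only step requiring any care is fixing $k_0$ large enough that the $o(1)$ correction in Shearer's bound falls below $1$; this is routine and depends only on Shearer's explicit error analysis, not on any new ingredient.
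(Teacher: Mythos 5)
Your proposal is correct and is exactly the route the paper indicates (it labels the statement ``a corollary of Shearer's bound on $\R(3,n)$'' and cites \cite{KKL} for it): invoke $\R(3,k) \leq (1+o(1))\,k^2/\log k$, absorb the $o(1)$ into the factor $2$ by choosing $k_0$ large, and then unwind the definition of the Ramsey number. One harmless caveat: Shearer's constant $1$ is relative to the natural logarithm, so with the paper's declared base-$2$ logarithm the leading constant is really $\log_2 e \approx 1.44$; your threshold argument still goes through unchanged since $1.44 < 2$.
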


Although the exact order of magnitude of $\R(4,n)$ is not known, Mattheus and Verstraete~\cite{MV} determined $\R(4,n)$ up to a factor of order $\log^2 n$ very recently.

\begin{theorem}[\cite{MV}]
	\[
	\Omega\left(\frac{n^3}{\log^4 n}\right)\leq \R(4,n) \leq O\left(\frac{n^3}{\log^2n}\right).
	\]
\end{theorem}

\noindent Therefore, \[ \Omega\left(n^{\frac{1}{3}}(\log n)^{\frac{2}{3}}\right)\leq \Q(4,n) \leq O\left(n^{\frac{1}{3}}(\log n)^{\frac{4}{3}}\right).
\]

For $t\geq 5$, the following is known.

\begin{theorem}[\cite{AKS}, \cite{BK}] For $t\geq 5$, we have
	\[\Omega\left(\frac{n^\frac{t+1}{2}}{(\log n)^{\frac{t+1}{2}-\frac{1}{t-2}}}\right)\leq \R(t,n)\leq O\left(\frac{n^{t-1}}{(\log n)^{t-2}}\right).\]
\end{theorem}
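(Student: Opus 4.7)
The plan is to prove the upper and lower bounds separately, following the methods of~\cite{AKS} and~\cite{BK} respectively.

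For the upper bound, I would invoke an AKS-style independence-number lemma: every $K_t$-free graph $G$ on $N$ vertices with average degree $d$ admits an independent set of size $\Omega_t\bigl((N/d)(\log d)^{1/(t-2)}\bigr)$. Given this, let $G$ be any $K_t$-free graph on $N = C_t n^{t-1}/(\log n)^{t-2}$ vertices with average degree $d$. If $d \leq n/\log n$, the trivial greedy bound $\alpha(G) \geq N/(d+1)$ already exceeds $n$; otherwise the lemma itself yields $\alpha(G) \geq n$ after a routine substitution, provided $C_t$ is sufficiently large. This forces $\R(t, n) \leq N$. The AKS lemma is in turn proved by iterating a random neighborhood-sampling step: choose a random vertex, pass to its neighborhood (which is $K_{t-1}$-free), and induct on $t$, with Shearer's triangle-free bound $\alpha(G) \geq (1+o(1))N(\log d)/d$ as the base case.

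For the lower bound I would analyze the $K_t$-free process on $m$ vertices, starting from the empty graph and iteratively adding a uniformly random edge subject to the graph remaining $K_t$-free until no such edge exists. Using the differential-equation method, one tracks the density of non-edges together with the extension counts associated to every relevant partial $K_t$-configuration, showing that all these quantities follow their predicted trajectories up to time of order $m^2 p$ with $p := m^{-(t-2)/(t-1)}(\log m)^{1/(t-1)}$. From the resulting control one deduces both that the output graph is a.a.s.\ $K_t$-free and that its independence number is at most $q = C_t' m^{1/2}(\log m)^{\gamma_t}$ for an explicit exponent $\gamma_t$; concretely, for every fixed vertex subset of size $q$ one shows the process a.a.s.\ places an edge inside it. Setting $n := q$ and solving for $m$ then gives $\R(t, n) \geq m = \Omega\bigl(n^{(t+1)/2}/(\log n)^{(t+1)/2 - 1/(t-2)}\bigr)$.

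The main obstacle is the differential-equation analysis for $t \geq 4$. For $t = 3$ one only tracks a small collection of variables, but for larger $t$ the family of extension variables grows substantially and must be controlled simultaneously via a self-correcting scheme together with sharp martingale concentration (Freedman-type inequalities). The independence-number bound is the delicate part: it does not follow from the edge count alone and requires an additional estimate on how often each fixed large vertex set receives an edge during the process. By comparison, the upper-bound proof, while clever, reduces by induction on $t$ to well-understood probabilistic tools, so I would import the heavy technical details of the process analysis directly from~\cite{BK}.
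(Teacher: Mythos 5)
The paper does not prove this theorem: it is quoted as a black box from the cited references \cite{AKS} and \cite{BK}, so there is no in-paper proof to compare against, and your task here is really to reconstruct the arguments of those two papers.

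Your description of the lower bound is a fair high-level summary of the Bohman--Keevash analysis of the $K_t$-free process: running the process on $m$ vertices, controlling a large family of extension variables via the differential-equation method with Freedman-type martingale concentration, and then bounding the independence number by showing that every $q$-set of vertices a.a.s.\ receives an edge. Your upper bound, however, has a real gap. You invoke an independence-number lemma of the form $\alpha(G)\ge \Omega_t\bigl((N/d)(\log d)^{1/(t-2)}\bigr)$ for $K_t$-free $G$ with average degree $d$, and split on whether $d\le n/\log n$. The first branch is fine, but the ``otherwise'' branch is not a routine substitution: the quantity $(N/d)(\log d)^{1/(t-2)}$ is decreasing in $d$ once $d$ is at all large, and since $d$ could be as large as $N-1$ the lemma would only produce an independent set of size roughly $(\log N)^{1/(t-2)}$, nowhere near $n$. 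To make the AKS argument close, you must first bound the degree: if $\alpha(G)<n$, then every vertex's neighborhood is $K_{t-1}$-free with independence number $<n$, hence has size less than $\R(t-1,n)$, so $\Delta(G)<\R(t-1,n)$; combining this degree bound with the sparse-graph independence lemma and inducting on $t$ (base case $t=3$ from Shearer's triangle-free bound) is what actually yields $\R(t,n)\le O\bigl(n^{t-1}/(\log n)^{t-2}\bigr)$. Without this degree bound the dichotomy on $d$ alone does not work, and in fact the precise shape of the AKS lemma you state (with exponent $1/(t-2)$ on $\log d$) does not obviously reproduce the claimed power of $\log n$ in the conclusion, so you should verify that the lemma you cite is strong enough once the degree bound is in place.
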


\noindent In particular, \[
\Omega\left(n^{\frac{1}{4}}(\log n)^{\frac{3}{4}}\right)\leq \Q(5,n)\leq \left(n^{\frac{1}{3}}(\log n)^{\frac{8}{9}}\right).\] 

For disjoint vertex sets $A$ and $B$ in $G$, denote by $d_G(A,B)\coloneqq \frac{e(G[A,B])}{|A||B|}$ the density of the pair $(A,B)$ in $G$. For $\epsilon>0$, we say that a pair $(A,B)$ is \emph{$\epsilon$-regular} if for every $A'\subseteq A$ and $B'\subseteq B$ such that $|A'|\geq \epsilon|A|$ and $|B'|\geq \epsilon|B|$, we have $|d_G(A',B')-d_G(A,B)|\leq \epsilon$. If additionally $d_G(A,B)\geq \gamma$, then we say that $(A,B)$ is \emph{($\epsilon,\gamma)$-regular}. A partition $V_1\cup\cdots\cup V_m$ of $V(G)$ is \emph{$\epsilon$-regular} if it is an equipartition and all but at most $\epsilon m^2$ pairs $(V_i,V_j)$ are $\epsilon$-regular.

\begin{lemma}[Szemer\'edi's Regularity Lemma, \cite{regularity}]\label{regularity lemma}
Suppose $0 < 1/M' \ll \epsilon$, $1/M \ll 1$ and $n \geq M$. For every $n$-vertex graph $G$ there exists an $\epsilon$-regular partition $V(G) = V_1 \cup \cdots \cup V_m$ with $M \leq m \leq M'$.
\end{lemma}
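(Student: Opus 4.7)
The plan is to follow Szemer\'edi's original energy-increment argument. I would define the \emph{index} (or mean-square density) of a partition $\mathcal{P}=\{V_1,\ldots,V_k\}$ of $V(G)$ by
\[\mathrm{ind}(\mathcal{P})\coloneqq \sum_{i,j}\frac{|V_i|\,|V_j|}{n^2}\,d_G(V_i,V_j)^2,\]
and note that trivially $0\leq \mathrm{ind}(\mathcal{P})\leq 1$. The strategy is to start from an arbitrary equipartition of $V(G)$ into $M$ parts and iteratively refine it, gaining a fixed amount of index at each step, until no further refinement is needed, at which point the partition must be $\epsilon$-regular.

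Two standard ingredients drive the proof. First, refining a partition can only increase the index: if $\mathcal{Q}$ refines $\mathcal{P}$ then $\mathrm{ind}(\mathcal{Q})\geq \mathrm{ind}(\mathcal{P})$, essentially a defect form of Cauchy--Schwarz applied cell by cell. Second is the key increment lemma: if $(V_i,V_j)$ is not $\epsilon$-regular, witnessed by $A\subseteq V_i$, $B\subseteq V_j$ with $|A|\geq \epsilon|V_i|$, $|B|\geq \epsilon|V_j|$ and $|d_G(A,B)-d_G(V_i,V_j)|>\epsilon$, then splitting $V_i$ as $A\cup(V_i\setminus A)$ and $V_j$ as $B\cup(V_j\setminus B)$ increases the $(i,j)$-contribution to the index by at least $\epsilon^{4}\cdot \frac{|V_i|\,|V_j|}{n^2}$. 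Refining simultaneously along all irregular pairs therefore produces a partition $\mathcal{P}'$ with $\mathrm{ind}(\mathcal{P}')\geq \mathrm{ind}(\mathcal{P})+\epsilon^{5}$ whenever more than $\epsilon k^2$ pairs of $\mathcal{P}$ are irregular.

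I would then iterate. Starting from an equipartition $\mathcal{P}_0$ of $V(G)$ into $M$ parts, at step $t$ either $\mathcal{P}_t$ is $\epsilon$-regular and we halt, or we pass to a refinement $\mathcal{P}_{t+1}$ whose index has grown by at least $\epsilon^{5}$. Because the index lies in $[0,1]$, the procedure terminates after at most $\lceil\epsilon^{-5}\rceil$ rounds. A final cleanup rebalances the part sizes by cutting each block into pieces of a common small size and sweeping the leftover vertices into a single exceptional class of size at most $\epsilon n$; this is a routine adjustment that only slightly weakens the regularity parameter. Note that since we only ever refine, the final number of parts $m$ automatically satisfies $m\geq M$.

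The principal obstacle is the quantitative control of the upper bound $M'$. Each refinement step can multiply $|\mathcal{P}_t|$ by as much as $2^{|\mathcal{P}_t|}$ (four refined classes per irregular pair, across up to $\binom{|\mathcal{P}_t|}{2}$ pairs), so the final $m$ grows as a tower of twos of height $\Theta(\epsilon^{-5})$. This is precisely why the hypothesis $1/M'\ll\epsilon$ is needed, and by a result of Gowers such tower-type dependence is genuinely unavoidable. Aside from careful bookkeeping of these bounds, no new idea is required.
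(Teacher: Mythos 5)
The paper does not prove Lemma~\ref{regularity lemma}; it is imported verbatim from the cited reference as a black box, so there is no in-paper proof to compare you against. Your sketch is the standard energy-increment argument, and it is correct in outline: define the mean-square density (index), note it lies in $[0,1]$ and is monotone under refinement, show an irregular pair gives a localized index gain of order $\epsilon^4 |V_i||V_j|/n^2$ so that more than $\epsilon k^2$ irregular pairs force a global gain of order $\epsilon^5$, iterate $O(\epsilon^{-5})$ times, and rebalance at the end; the tower-type growth of $m$ and the resulting need for $1/M'\ll\epsilon$ are exactly right. Two places you wave your hands that deserve a flag. First, the simultaneous refinement step implicitly uses the common (Venn-diagram) refinement of all the witnessing bipartitions together with the monotonicity of index under further refinement to justify summing the per-pair increments; this is correct but should be said. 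Second, the version stated in the paper is an \emph{equipartition} with no exceptional class, whereas your ``sweep leftovers into a single exceptional class'' produces the exceptional-set formulation; to match the statement you must redistribute the exceptional vertices among the parts and then verify that regularity survives this perturbation up to a constant change in $\epsilon$. Neither is a conceptual gap, but as written the sketch proves a slightly different (though standard and equivalent) formulation of the lemma.
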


\begin{lemma}[Slicing Lemma, \cite{Szemeredi}]\label{slicing lemma} 
Let $\epsilon < \alpha, \gamma, 1/2$. Suppose that $(A,B)$ is an $(\epsilon, \gamma)$-regular pair in a graph $G$. If $A' \subseteq A$ and $B' \subseteq B$ satisfies $|A'| \geq \alpha |A|$ and $|B'| \geq \alpha |B|$, then $(A', B')$ is an $(\epsilon', \gamma-\epsilon)$-regular pair in $G$, where $\epsilon' \coloneqq  \max\{\epsilon/\alpha, 2\epsilon\}.$
\end{lemma}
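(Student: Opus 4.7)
The plan is to verify the two conditions in the definition of $(\epsilon', \gamma-\epsilon)$-regularity for $(A', B')$ directly, by applying the $(\epsilon, \gamma)$-regularity hypothesis on $(A, B)$ twice to judiciously chosen subsets. Throughout I would use that $\alpha > \epsilon$ (so $\alpha|A| > \epsilon|A|$) and that $\epsilon' \geq \max\{\epsilon/\alpha, 2\epsilon\}$ by definition.

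First I would establish the density bound. Since $|A'| \geq \alpha|A| > \epsilon|A|$ and $|B'| \geq \alpha|B| > \epsilon|B|$, applying the $\epsilon$-regularity of $(A,B)$ to the pair $(A', B')$ itself gives $|d_G(A', B') - d_G(A, B)| \leq \epsilon$, so $d_G(A', B') \geq \gamma - \epsilon$. This handles the $(\gamma - \epsilon)$ density requirement.

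Second, for the regularity of $(A', B')$, I would pick arbitrary $A'' \subseteq A'$ and $B'' \subseteq B'$ with $|A''| \geq \epsilon'|A'|$ and $|B''| \geq \epsilon'|B'|$. Using $\epsilon' \geq \epsilon/\alpha$, we have $|A''| \geq (\epsilon/\alpha)\cdot \alpha|A| = \epsilon|A|$ and similarly $|B''| \geq \epsilon|B|$. A second application of $\epsilon$-regularity of $(A, B)$ then yields $|d_G(A'', B'') - d_G(A, B)| \leq \epsilon$, and combining this with the bound from the first step via the triangle inequality gives $|d_G(A'', B'') - d_G(A', B')| \leq 2\epsilon \leq \epsilon'$, as required.

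There is no real obstacle here; the lemma amounts to careful bookkeeping of the definition. The one point to watch is the choice of $\epsilon'$: the factor $\epsilon/\alpha$ is precisely what is needed to translate a relative-density threshold inside $A'$ (resp.\ $B'$) into the absolute $\epsilon$-threshold required inside $A$ (resp.\ $B$) to invoke the hypothesis, while the $2\epsilon$ term absorbs the twofold use of the triangle inequality.
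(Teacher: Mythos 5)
Your proof is correct and is the standard argument for the Slicing Lemma; the paper itself does not reprove this classical fact but simply cites it, and your bookkeeping matches exactly what the cited source does. The two uses of $\epsilon$-regularity of $(A,B)$ (once on $(A',B')$ to get both the density lower bound and a reference density, then on arbitrary $(A'',B'')$ with $|A''|\geq\epsilon'|A'|\geq\epsilon|A|$), combined via the triangle inequality, is precisely the intended proof, and your explanation of why each piece of $\epsilon'=\max\{\epsilon/\alpha,2\epsilon\}$ is needed is accurate.
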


Let $\epsilon, \gamma>0$. For a given graph $G$ with partition $V_1\cup\cdots\cup V_m$, we define the cluster graph $R\coloneqq R(\epsilon, \gamma)$ as follows: $V(R)=[m]$; $ij\in E(R)$ if and only if $(V_i, V_j)$ is an $(\epsilon,\gamma)$-regular pair in $G$. We use the following lemma to derive Tur\'an-type properties of the cluster graph.

\begin{lemma}[\cite{BHS}]\label{lemma to derive K_4 free} 
	Let $G$ be an $n$-vertex graph with $\alpha(G) < \mathbf{Q}(p, n2^{-\omega(n)\log^{1-1/q}n})$. We apply Szemer\'edi's Regularity Lemma to $G$ to obtain an $\epsilon$-regular partition of $V(G)$ and the corresponding cluster graph $R=R(\epsilon,\gamma)$, where $\epsilon\ll \gamma$. If there exists a $K_q$ in $R$, then we can find a $K_{pq}$ in $G$.
\end{lemma}


We also need the following dependent random choice type of lemma, which is a generalization of Lemma 3.1 in \cite{KKL}.

\begin{lemma}\label{lemma 3.1 4-partite generalized}
	Let $k\geq 2$ be a fixed integer. Suppose $G = Z_1 \cup \cdots \cup Z_k$ is a $k$-partite graph with $|Z_i| = n$ for each $i \in [k]$. Let $0 < \gamma < 1$, $c\gg\frac{1}{\log n}$  and $2\leq t \in \N$ be fixed real numbers. If $|N(v, Z_i)| \geq \gamma n$ for every $v \in Z_k$ and $i \in [k-1]$, then there exists $S \subseteq Z_k$ of size $|S| = \frac{1}{2}n^{1-c}$ such that every $t$-tuple of vertices $T \in \binom{S}{t}$ satisfies $|N(T,Z_i)| \geq \gamma^{\frac{2(k-1)(t+1)}{c}}n$ for each $i \in [k-1]$. 
\end{lemma}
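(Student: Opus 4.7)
The plan is to extend the dependent random choice argument of Lemma~3.1 in \cite{KKL} from the $4$-partite case to arbitrary $k$ by sampling independently and in parallel from each of the parts $Z_1,\dots,Z_{k-1}$. Set $r \coloneqq \lfloor c \log n / ((k-1)\log(1/\gamma)) \rfloor$, which tends to infinity since $c \gg 1/\log n$, and set $\tau \coloneqq \gamma^{2(k-1)(t+1)/c}$. For each $i \in [k-1]$, let $A_i$ be a sequence of $r$ vertices chosen independently and uniformly at random with replacement from $Z_i$, with the choices across different $i$ mutually independent, and put
\[
S' \coloneqq \bigcap_{i=1}^{k-1}\{v \in Z_k : A_i \subseteq N(v,Z_i)\}.
\]
Call a $t$-subset $T \in \binom{Z_k}{t}$ \emph{bad} if $|N(T, Z_i)| < \tau n$ for some $i \in [k-1]$. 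The output $S$ will then be $S'$ with one vertex deleted from each bad $t$-subset that survives.

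For the first moment of $|S'|$, the degree hypothesis together with the independence of the $A_i$ give, for each $v \in Z_k$,
\[
\Pr[v \in S'] \;=\; \prod_{i=1}^{k-1}\left(\frac{|N(v,Z_i)|}{n}\right)^r \;\geq\; \gamma^{(k-1)r} \;\geq\; n^{-c}
\]
by the choice of $r$, so $\E|S'| \geq n^{1-c}$. For a bad $T$ witnessed by some $i$,
\[
\Pr[T \subseteq S'] \;\leq\; (|N(T,Z_i)|/n)^r \;\leq\; \tau^r \;=\; \gamma^{2(k-1)(t+1)r/c}.
\]
Using $(k-1)r \geq c\log n/\log(1/\gamma) - (k-1)$, a direct computation gives $\tau^r \leq n^{-2(t+1)}\cdot(1/\gamma)^{2(t+1)(k-1)/c}$; the hypothesis $c \gg 1/\log n$ forces the second factor to be $n^{o(1)}$, and hence
\[
\E\bigl[\#\{\text{bad } T \subseteq S'\}\bigr] \;\leq\; (k-1)\binom{n}{t}\tau^r \;\leq\; (k-1)\, n^{-t-2+o(1)} \;=\; o(n^{1-c}).
\]

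Combining these two estimates yields $\E\bigl[|S'|-\#\{\text{bad } T \subseteq S'\}\bigr] \geq \tfrac12 n^{1-c}$ for $n$ sufficiently large. Fixing any realization achieving this bound and deleting one vertex from each bad $t$-subset of $S'$ produces the required $S$. The main point to watch is the exponent in $\tau$: the DRC computation naturally produces an exponent near $(k-1)(t+c-1)/c$, and the factor $2$ built into the claimed $2(k-1)(t+1)/c$ supplies the cushion needed to absorb simultaneously the union-bound loss of $n^t$ and the $n^{o(1)}$ slack arising from the integrality of $r$ together with the condition $c \gg 1/\log n$. There is no further structural obstacle beyond this book-keeping; the argument is otherwise identical to the classical dependent random choice proof.
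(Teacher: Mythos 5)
Your argument is correct and is essentially the same dependent random choice proof as in the paper: sample a sequence $A_i$ of uniform vertices from each $Z_i$, take $S'$ to be the common neighborhood in $Z_k$, bound $\E|S'|$ from below and the expected number of bad $t$-tuples in $S'$ from above, and then delete one vertex per surviving bad tuple. The only real difference is parametric — the paper takes a sample size $q = \frac{c\log n}{2(k-1)\log(1/\gamma)}$ (written as $-\frac{c}{2(k-1)}\log_\gamma n$, and treated as a real number so the algebra cancels exactly to $\E|S'| \geq n^{1-c/2}$ and $\E[\#\text{bad}] \leq n^{-1}$), whereas you take $r$ roughly twice that size and floor it, absorbing the resulting integrality loss into the $n^{o(1)}$ slack guaranteed by $c \gg 1/\log n$; your version is slightly more careful about $r$ being an integer, at the cost of landing on the tighter bound $\E|S'|\geq n^{1-c}$ rather than $n^{1-c/2}$, but both comfortably exceed $\frac12 n^{1-c}$ after subtracting the bad count.
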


\begin{proof}
For each $i \in [k-1]$, let $Q_i$ be a set of vertices in $Z_i$ chosen uniformly at random with repetition such that $|Q_i| \coloneqq  q \coloneqq  -\frac{c}{2(k-1)}\log_{\gamma}n$. Call $T \in \binom{Z_k}{t}$ a {\it bad} $t$-tuple if there exists $i \in [k-1]$ such that $|N(T,Z_i)| < \gamma^a n$, where $a\coloneqq  \frac{2(k-1)(t+1)}{c}$. Let $S' \coloneqq  N(\cup_{i=1}^{k-1}Q_i, Z_k)$. Define a random variable $X$ to be the number of bad $t$-tuples $T$ with $T\subseteq S'$. For every bad $t$-tuple $T \in \binom{Z_k}{t}$, we have
\[
\P[T \subseteq S'] = \P\left[\bigcup\limits_{i=1}^{k-1}Q_i \subseteq N(T)\right] = \prod_{i=1}^{k-1} \left(\frac{|N(T,Z_i)|}{|Z_i|}\right)^q \leq \left( \frac{\gamma^a n}{n} \right)^q= \gamma^{aq}.
\]

\noindent By linearity of expectation, we have \[
 \E[X] \leq \binom{n}{t} \gamma^{aq} \leq n^t\gamma^{aq}.\] We also have
  \begin{equation*}
 	\begin{aligned}
 		\E [ |S'|] &= \sum_{v \in Z_1} \P[v \in S'] = \sum_{v \in Z_1} \P\left[\bigcup\limits_{i=1}^{k-1}Q_i \subseteq N(v)\right]\\
 		&= \sum_{v \in Z_1} \prod_{i=1}^{k-1} \left(\frac{|N(v,Z_i)|}{|Z_i|}\right)^q \geq n\left( \frac{\gamma n}{n} \right)^{(k-1)q}= n\gamma^{(k-1)q}.
 	\end{aligned}
 \end{equation*}

\noindent Therefore,
\[\E[|S'|-X] \geq n\gamma^{(k-1)q}-n^t\gamma^{aq} = n^{1-c/2} - n^{-1} \geq \frac{1}{2}n^{1-c/2} \geq \frac12 n^{1-c},\] which implies that there exist $Q_1, \ldots, Q_{k-1}$ such that $|S'|-X\geq \frac{1}{2}n^{1-c}$. Let $S \subseteq Z_k$ be the set obtained by deleting one vertex from every bad $t$-tuple in $S'$, then $S$ will satisfy the conclusion of the lemma.
\end{proof}

\subsection{Weighted Tur\'an-type Results}

Among others, we utilize a series of weighted Tur\'an-type results to analyze the properties of the cluster graph.



Let $G$ be a graph. The \emph{standard clique edge-weighting} is an assignment $w$ of weights to the edges of $G$ as follows. Let $e$ be an edge and $r$ be the order of the largest clique containing $e$ in $G$. Then we define the weight of the edge $e$ as \[
w(e) \coloneqq  \frac{r}{2(r-1)}.
\]
We extend the definition of the weight function $w$ to $G$: \[
w(G) \coloneqq  \sum_{e\in E(G)} w(e).
\]
Notice that the weights are defined such that
for every $r \geq 2$, 
\[
\lim_{n \to \infty}\frac{w(T(n,r))}{n^2} = \frac{1}{4}.
\]


\begin{theorem}[\cite{CliqueWeighting}, \cite{CliqueWeighting2}]\label{cliqueweighting}
	Let $G$ be an $n$-vertex graph and $w$ be the standard clique edge-weighting. Then 
	\[
	w(G) \leq \frac{n^2}{4}.
	\]
	Equality holds when $n$ is a multiple of some $r$ and $G=T(n,r)$ is the Tur\'an graph.
\end{theorem}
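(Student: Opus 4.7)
The plan is to reduce the theorem to the complete multipartite case by Zykov-type symmetrization and then finish by a short convexity computation. Let $G^*$ be an $n$-vertex graph maximizing $w$. If $u, v$ are any two non-adjacent vertices of $G^*$ with $D_u \coloneqq \sum_{e \ni u} w(e) \geq D_v$, I replace the neighborhood of $v$ by $N(u)$ to form $G'$. The aim is to show $w(G') \geq w(G^*)$; iterating this operation, every non-edge becomes a twin relation and so $G^*$ may be assumed to be complete multipartite.

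Once $G$ is complete $r$-partite with parts of sizes $n_1, \ldots, n_r$, every edge extends to a maximum clique of order exactly $r$ by picking one vertex from each of the remaining $r-2$ parts, so the max-clique order $r(e)$ of each edge equals $r$ and
\[
w(G) = \frac{r}{2(r-1)} \left( \binom{n}{2} - \sum_{i=1}^r \binom{n_i}{2} \right).
\]
By convexity of $x \mapsto \binom{x}{2}$, the sum $\sum_i \binom{n_i}{2}$ is minimized when $n_1 = \cdots = n_r = n/r$, giving $\binom{n}{2} - \sum_i \binom{n_i}{2} = \frac{n^2(r-1)}{2r}$. Substituting yields $w(G) \leq \frac{n^2}{4}$, with equality precisely when $r \mid n$ and $G = T(n, r)$.

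The main obstacle is the symmetrization step, because changing $v$'s neighborhood alters the maximum cliques containing edges not incident to $v$ whenever those cliques used $v$, and the weight $\phi(r) \coloneqq r/(2(r-1))$ depends on these clique orders. Since $\phi$ is strictly decreasing in $r$, lowering $r(e)$ raises $w(e)$ and vice versa, so the change is not monotone in a naive sense. The verification should split into two parts: (i) show that $\sum_{e \ni v'} w_{G'}(e) \geq \sum_{e \ni v} w_{G^*}(e)$, which reduces to $D_u \geq D_v$ together with the fact that every clique in $G^*$ through $u$ is preserved in $G'$; and (ii) for each edge $e$ not incident to $v$ or $v'$, bound $w_{G'}(e) - w_{G^*}(e)$ by pairing the maximum cliques in $G^*$ that use $v$ against those in $G'$ that use $v'$, using the twin property $N_{G'}(v') = N_{G'}(u)$ to exhibit a clique of the same order in each direction. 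I expect (ii) to be the technically delicate point and would execute it by an explicit bijection on the max cliques of each order passing through $v$ respectively $v'$.
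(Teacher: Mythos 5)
The paper does not prove Theorem~\ref{cliqueweighting}; it is quoted from \cite{CliqueWeighting} and \cite{CliqueWeighting2}, so there is no in-paper argument to compare against. Evaluating your proposal on its own terms: the Zykov-plus-convexity strategy can be made to work, and the symmetrization inequality $w(G')\ge w(G^*)$ is in fact true, but two points need repair.

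Your plan for step (ii) aims at the wrong pairing and is harder than necessary. Cliques of $G^*$ through $v$ have no counterpart in $G'$ and do not need one: for an edge $e\not\ni v$, any clique $C$ of $G'$ through $e$ with $v'\in C$ has $u\notin C$ (since $u\notin N_{G'}(v')=N_{G^*}(u)$), and $(C\setminus\{v'\})\cup\{u\}$ is a clique of $G^*$ through $e$ of the same order, while cliques of $G'$ through $e$ avoiding $v'$ are already cliques of $G^*$. This gives the one-sided bound $r_{G'}(e)\le r_{G^*}(e)$, and since $\phi(r)=r/(2(r-1))$ is decreasing, $w_{G'}(e)\ge w_{G^*}(e)$; no bijection and no ``each direction'' is needed, and the swap should be against cliques through $u$, not through $v$. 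The same swap argument shows $D_{v'}^{G'}=D_u^{G^*}$, giving $w(G')-w(G^*)\ge D_u-D_v\ge 0$.

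The genuine gap is the sentence ``iterating this operation, every non-edge becomes a twin relation.'' This does not follow from the lemma. In a $w$-maximizer $G^*$ one is forced to have $D_u=D_v$ for every non-adjacent pair (otherwise one symmetrization already beats $G^*$), so the iteration makes no visible progress, and equal weighted degrees across non-edges is far weaker than being complete multipartite. The repair is the standard Zykov two-step on a bad triple $u\not\sim v$, $v\not\sim w$, $u\sim w$: symmetrize $u\to v$ to get $G'$; since $w(G')\le w(G^*)$ by maximality while every term in the decomposition of $w(G')-w(G^*)$ is $\ge 0$, all edge-weights off $u$ are unchanged, hence $D_v^{G'}=D_v^{G^*}$ and $D_w^{G'}=D_w^{G^*}-w_{G^*}(uw)<D_v^{G'}$. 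A second symmetrization $w\to v$ then increases $w$ by at least $D_v^{G'}-D_w^{G'}>0$, a contradiction. With that in place your final convexity computation for the complete $r$-partite case is correct.
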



The following variations of the standard clique edge-weighting theorem were discussed in~\cite{BL}.

\begin{theorem}[Theorem 3.4, \cite{BL}]\label{K_5-free_chubby}
	Let $G$ be an $n$-vertex $K_4$-free graph with a weight function $w$ on $E(G)$ as follows: if an edge $e\in E(G)$ is contained in some triangle, then let $w(e)\coloneqq 4/5$; otherwise, let $w(e)\coloneqq 1$. Then \[
	w(G) \leq \left(\frac{4}{15}+o(1)\right)n^2.
	\] 	Moreover, for every $\varepsilon > 0$ if $n$ is sufficiently large and $w(G) = \left(\frac{4}{15} +o(1)\right)n^2$, then $G$ is in edit distance at most  $\varepsilon n^2$ from $T(n,3)$.
\end{theorem}

A triangle in $G$ is called \emph{a-heavy} if for every edge $e$ of it $w(e)>a$. A triangle in $G$ is called \emph{b-chubby} if for some edge $e$ of it $w(e)>b$. The following result is an immediate corollary of Theorem 3.2 in~\cite{BL}. 

\begin{theorem}[\cite{BL}]\label{K_4-free-heavy}
	Let $G$ be an $n$-vertex $K_4$-free graph with a weight function $w: E(G)\to [0,1]$. Let $a\in[0,1]$ be fixed. Suppose that $G$ contains no $a$-heavy triangle.\\
	(i) If $a=2/3$, then \[w(G)\leq \left(\frac{3}{10}+o(1)\right)n^2.\] 
	(ii) If $a=3/4$, then \[w(G)\leq \left(\frac{4}{13}+o(1)\right)n^2.\] 
\end{theorem}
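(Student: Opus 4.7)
The plan is to deduce both bounds from a single weighted Lagrangian argument, which in fact yields the cleaner $w(G)\le n^2/(4-a)$ (specialising to $3n^2/10$ at $a=2/3$ and $4n^2/13$ at $a=3/4$). Introduce the graph Lagrangian
\[
L(G,w) \;:=\; \max_{x \in \Delta_n}\;\sum_{ij\in E(G)} w_{ij}\,x_ix_j,\qquad \Delta_n := \bigl\{x \in \mathbb{R}_{\ge 0}^n : \textstyle\sum_i x_i = 1\bigr\}.
\]
Evaluating at $x\equiv 1/n$ gives $L(G,w) \ge w(G)/n^2$, so the whole theorem reduces to the inequality $L(G,w) \le 1/(4-a)$.

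First I would run a Zykov-style symmetrisation to reduce the support of any optimiser to a clique of $G$. If $i,j$ is a non-adjacent pair in $G$, then $\partial^2 f/\partial x_i\partial x_j$ of the objective vanishes (there is no $w_{ij}x_ix_j$ term), so along the segment $x_i+x_j=\text{const}$ with the remaining coordinates frozen $f$ is linear. Shifting all mass towards whichever endpoint has larger marginal $\sum_{k\sim i}w_{ki}x_k$ therefore cannot decrease $f$. Iterating, an optimiser may be assumed supported on a clique of $G$, which has at most three vertices since $G$ is $K_4$-free.

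It remains to evaluate $L$ on cliques of size at most three. On a $K_2$ with edge weight $w\in[0,1]$, trivially $L\le w/4\le 1/4$. On a $K_3$ with edge weights $w_{12},w_{13},w_{23}\in[0,1]$, the $K_3$ is a triangle of $G$, so the no-$a$-heavy-triangle hypothesis forces $\min_{ij}w_{ij}\le a$. Taking WLOG $w_{13}\le a$ and relaxing the other two to $1$, and using $x_1+x_3=1-x_2$ with the AM--GM bound $x_1x_3\le (1-x_2)^2/4$,
\[
L \;\le\; x_1x_2 + x_2x_3 + a\,x_1x_3 \;=\; x_2(1-x_2) + a\,x_1x_3 \;\le\; x_2(1-x_2) + \tfrac{a}{4}(1-x_2)^2.
\]
The one-variable maximum over $x_2\in[0,1]$ is attained at $x_2=(2-a)/(4-a)$ and equals $1/(4-a)$, which dominates $1/4$ for every $a\ge 0$. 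Combining the $K_2$ and $K_3$ cases gives $L(G,w)\le 1/(4-a)$ and hence $w(G)\le n^2/(4-a)$; tightness is witnessed by the $K_3$-blowup with part sizes $(\tfrac{n}{4-a},\tfrac{(2-a)n}{4-a},\tfrac{n}{4-a})$ and edge weights $(1,a,1)$.

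The main obstacle is verifying the Zykov step carefully in the weighted setting. The key observation is that symmetrisation modifies only the distribution $x$, never the graph or its edge weights, so both hypotheses (``$G$ is $K_4$-free'' and ``$G$ has no $a$-heavy triangle'') are preserved throughout, and the terminal clique inherits both as an induced substructure. Compactness of $\Delta_n$ guarantees that a maximiser exists, so the entire argument avoids the Regularity Lemma and reduces the theorem to the one-variable calculus sketched above.
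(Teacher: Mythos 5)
The theorem is imported from \cite{BL} and proved there, not in this paper, so there is no in-paper argument to compare against; I assess your proposal on its own. It is correct, and it in fact gives the cleaner exact bound $w(G)\le n^2/(4-a)$ with no $o(1)$ error: reducing $w(G)/n^2$ to the weighted Lagrangian $L(G,w)$ and symmetrizing in the style of Motzkin--Straus along non-edges (where the quadratic form is genuinely linear on $x_i+x_j=\mathrm{const}$, the cross-partial vanishing because weights live only on $E(G)$) validly pushes an optimizer onto a clique of $G$, hence onto at most three vertices by $K_4$-freeness. On a $K_2$ the Lagrangian is at most $1/4$; on a triangle the no-$a$-heavy hypothesis forces some edge weight $\le a$, and relaxing the other two to $1$ together with AM--GM yields the one-variable optimum $1/(4-a)$, which equals $3/10$ at $a=2/3$ and $4/13$ at $a=3/4$, and which dominates $1/4$. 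Your $K_3$-blowup construction certifies tightness. Two small points worth spelling out in a final write-up: the symmetrization terminates because each shift strictly shrinks the support of $x$, so after at most $n$ steps the support is a clique; and, as you already emphasize, the shifts alter only $x$ and never $(G,w)$, so both structural hypotheses ($K_4$-freeness and absence of $a$-heavy triangles) survive to the terminal clique and can be invoked there.
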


\subsection{Proof Idea of the Main Results}

Let $9\leq t\leq 13$ be the size of the clique we want to forbid and $f(n)$ be the corresponding inverse Ramsey number depending on the choice of $t$, i.e., for $t=9$, $f(n)=\Q(3,n)$ or $\Q(4,n)$; for $10\leq t\leq 12$, $f(n)=\Q(4,n)$; while for $t=13$, $f(n)=\Q(5,n)$. Fix $\gamma>0$ and let \begin{equation}\label{parameter}
	0 < 1/n_0\leq 1/n \ll \delta < 1/M' \ll \epsilon \ll \gamma \ll 1.
\end{equation}
 Let $G$ be an $n$-vertex $K_t$-free graph with $\alpha(G)\leq \delta f(n)$. By applying Lemma~\ref{regularity lemma} to $G$,
we obtain an $\epsilon$-regular partition $V(G)=V_1 \cup \cdots \cup V_m$ with $1/\epsilon\leq m\leq M'$. Let $R\coloneqq  R(\epsilon, \gamma/2)$ be the corresponding cluster graph on $m$ vertices. By Lemma~\ref{lemma to derive K_4 free}, we know that $R$ contains no large clique. More specifically: By the $p=2$ version of Lemma~\ref{lemma to derive K_4 free}, $R$ is $K_5$-free when $t=9, f(n)=\Q(3,n)$; By the $p=3$ version of Lemma~\ref{lemma to derive K_4 free}, $R$ is $K_4$-free when $9\leq t\leq 12, f(n)=\Q(4,n)$, and $R$ is $K_5$-free when $t=13, f(n)=\Q(5,n)$. Note that each edge $ij\in E(R)$ corresponds to at most $d_G(V_i,V_j)(\frac{n}{m})^2$ edges in $G$. The number of the rest of the edges of $G$, which are exactly those not corresponding to $E(R)$, is at most \begin{equation}\label{nonedge of R}
	\epsilon m^2\left(\frac{n}{m}\right)^2+\frac{\gamma}{2}\left(\frac{n}{m}\right)^2\binom{m}{2}+\binom{n/m}{2}m\leq \epsilon n^2+\frac{\gamma}{4}n^2+\frac{1}{2m}n^2\leq \frac{\gamma}{3}n^2,
\end{equation} which is small. Instead of computing $e(G)$ directly as in \cite{KKL}, we apply the weighted Tur\'an-type results from Section 2.3 to obtain an upper bound on $e(R)$, thus obtain an upper bound on the corresponding number of edges in $G$, which makes up most of $E(G)$.

\section{Proofs of the Main Results}\label{proofofmain}

\subsection{$K_9$-free: Proof of Theorem~\ref{theorem K_9}}\label{9pf}

Let all the parameters be as in (\ref{parameter}). Let $G$ be an $n$-vertex $K_9$-free graph with $\alpha(G)\leq \delta \Q(3,n)$. To prove Theorem~\ref{theorem K_9}, it suffices to prove that \[
e(G)\leq \frac{3}{10}n^2+\gamma n^2.\] Let $R\coloneqq R(\epsilon, \gamma/2)$ be the corresponding cluster graph on $m$ vertices. By the $p=2$ version of Lemma~\ref{lemma to derive K_4 free}, $R$ is $K_5$-free. In fact, we can prove that $R$ contains no $K_4$.

\begin{claim}\label{cluster_k4-free}
	$R$ is $K_4$-free.
\end{claim}

\begin{proof}
	Suppose that $\{1,2,3,4\}$ spans a $K_4$ in $R$. Then, $(V_i, V_j)$ is $\epsilon$-regular with $d(V_i,V_j) \geq \gamma/2$ for every pair $\{i,j\}\in\binom{[4]}{2}$. For each $i \in [4]$, there exists a subset $V_i^* \subseteq V_i$ such that $|V_i^*| = (1-3\epsilon)|V_i|$ and $\delta^{cr}(G[V_1^*,V_2^*,V_3^*, V_4^*])\geq \gamma|V_i^*|/4$. Applying Lemma~\ref{lemma 3.1 4-partite generalized} to $G[V_1^*,V_2^*,V_3^*,V_4^*]$ with $k=4, c=1/3$ and $t=2$ gives us a set $S \subseteq V_1^*$ of size $\frac{1}{2}|V_1^*|^{2/3}\geq \frac{1}{3}(\frac{n}{m})^{2/3}$ such that every $P \in \binom{S}{2}$ satisfies $|N(P,V_i^*)| \geq (\frac{\gamma}{4})^{54}|V_i^*|\geq \gamma^{56}\frac{n}{m}$ for each $i \in \{2,3,4\}$. Recall that $\alpha(G)\leq \delta \Q(3,n)$ and $\Q(3,n)=\Theta (\sqrt{n\log n})$. Since $\frac{1}{3}(\frac{n}{m})^{2/3}>\alpha(G)$, the set $S$ contains an edge $uv\in E(G)$ with $|N(\{u,v\},V_i^*)|\geq \gamma^{56}\frac{n}{m}$ for each $i \in \{2,3,4\}$. 
	
	By applying Lemma~\ref{slicing lemma} and deleting all vertices of low degree if necessary, we could get subsets $V_i' \subseteq N(\{u,v\},V_i^*)$ for $i \in \{2,3,4\}$ satisfying that $|V'_2|=|V'_3|=|V'_4| \geq \gamma^{60}\frac{n}{m}$, $\delta^{cr}(G[V'_2,V'_3,V'_4]) \geq  \gamma|V'_i|/5$, and $(V'_i,V'_j)$ is $(\sqrt{\epsilon}, \gamma/4)$-regular for every pair $\{i,j\}\in\binom{\{2,3,4\}}{2}$. We apply Lemma~\ref{lemma 3.1 4-partite generalized} to $G[V'_2,V'_3,V'_4]$ with $k=3, c=1/3$ and $t=2$. This gives us a set $S' \subseteq V'_2$ of size $\frac{1}{2}|V'_2|^{2/3}\geq \gamma^{41}(\frac{n}{m})^{2/3}$ such that every $P \in \binom{S'}{2}$ satisfies $|N(P,V'_i)| \geq (\frac{\gamma}{5})^{36}|V'_i|\geq \gamma^{97}\frac{n}{m}$ for each $i \in \{3,4\}$. Since $\gamma^{41}(\frac{n}{m})^{2/3}>\alpha(G)$, the set $S'$ contains an edge $xy\in E(G)$ with $|N(\{x,y\},V'_i)|\geq \gamma^{97}\frac{n}{m}$ for each $i \in \{3,4\}$. 
	
	Again, by applying Lemma~\ref{slicing lemma} and deleting all vertices of low degree if necessary, we could get $V_i'' \subseteq N(\{x,y\},V'_i)$ for $i \in \{3,4\}$ such that $|V''_3|=|V''_4| \geq \gamma^{99}\frac{n}{m}$, $\delta(G[V''_3,V''_4])\geq \gamma|V''_i|/6$ and $(V''_3,V''_4)$ is $(\epsilon^{1/4}, \gamma/5)$-regular. We apply Lemma~\ref{lemma 3.1 4-partite generalized} once more to $G[V_3'',V_4'']$ with $k=2, c=1/3$ and $t=2$. This gives us a set $S''\subseteq V''_3$ of size $\frac{1}{2}|V''_3|^{2/3}\geq \gamma^{67}(\frac{n}{m})^{2/3}$ such that every $P\in\binom{S''}{2}$ satisfies $|N(P,V''_4)| \geq (\frac{\gamma}{6})^{18}|V''_4|\geq \gamma^{118}\frac{n}{m}$. Again, $S''$ contains an edge $zw\in E(G)$ with $|N(\{z,w\},V''_4)|\geq \gamma^{118}\frac{n}{m}$ since $\gamma^{67}(\frac{n}{m})^{2/3}>\alpha(G)$. Note that $\Q(3,\gamma^{118}\frac{n}{m})>\delta\Q(3,n)\geq \alpha(G)$. Therefore, $|N(\{z,w\},V''_4)|$ contains a $K_3$, which together with $uv, xy$ and $zw$ forms a $K_9$ in $G$, a contradiction.
\end{proof}

Now we analyze the triangles in $R$. Recall that a triangle $ijk$ in $R$ is \textit{$(2/3+\gamma)$-heavy} if $d_G(V_{i'}, V_{j'}) > 2/3 + \gamma$ for all pairs $i'j' \in \binom{\{i,j,k\}}{2}$.

\begin{claim}\label{modified Claim 3.2a Q3}
	No triangle in $R$ is $(2/3+\gamma)$-heavy.
\end{claim}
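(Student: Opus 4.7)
My plan is to argue by contradiction. Suppose $ijk$ is a heavy triangle in $R$; I will exhibit a copy of $K_9$ inside $G[V_i\cup V_j\cup V_k]$, contradicting the $K_9$-freeness of $G$. The target $K_9$ will be built on a balanced tripartition: triangles $T_i\subseteq V_i$, $T_j\subseteq V_j$, $T_k\subseteq V_k$, pairwise joined by complete bipartite graphs $K_{3,3}$. The within-part edges come from the three triangles, while the cross edges are produced by successively passing to common neighborhoods. The threshold $2/3$ in the definition of heavy is tuned precisely so that, for any three vertices of degree $\geq (2/3+\gamma-\epsilon)N$ into some part of size $N$, inclusion--exclusion on their neighborhoods leaves a positive surplus of order $\gamma N$.

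Write $N := n/m$. I first locate $T_k$. Let $V_k^\circ \subseteq V_k$ consist of those $v$ with $d(v,V_i), d(v,V_j)\geq (2/3+\gamma-\epsilon)N$; the $\epsilon$-regularity of the pairs $(V_k,V_i)$ and $(V_k,V_j)$ yields $|V_k^\circ|\geq (1-2\epsilon)N$. Since $\alpha(G[V_k^\circ])\leq \delta\Q(3,n)\ll \Q(3,|V_k^\circ|)$ by the hierarchy, Theorem~\ref{theorem indep set} supplies a triangle $T_k=\{c_1,c_2,c_3\}\subseteq V_k^\circ$. Set $A_i:=N(T_k,V_i)$ and $A_j:=N(T_k,V_j)$; by inclusion--exclusion $|A_i|,|A_j|\geq 3(\gamma-\epsilon)N\geq 2\gamma N$, and by Lemma~\ref{slicing lemma} the pair $(A_i,A_j)$ remains regular with density at least $2/3+\gamma-\epsilon$. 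I now repeat the same strategy one level down: let $A_j^\circ\subseteq A_j$ be the vertices of large degree into $A_i$, apply Theorem~\ref{theorem indep set} to $G[A_j^\circ]$ to obtain a triangle $T_j=\{b_1,b_2,b_3\}\subseteq A_j^\circ$, set $A_i':=N(T_j,A_i)$ of size at least $\gamma^2 N$, and apply Theorem~\ref{theorem indep set} a third time inside $A_i'$ to obtain a triangle $T_i=\{a_1,a_2,a_3\}$. Unwinding the definitions, $T_i\cup T_j\cup T_k$ induces a $K_9$: each of $T_i,T_j,T_k$ is a triangle by construction; $a_s\sim c_\ell$ and $b_\ell\sim c_{\ell'}$ because $T_i,T_j$ lie in common neighborhoods of $T_k$; and $a_s\sim b_\ell$ because $T_i\subseteq A_i'=N(T_j,A_i)$.

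The main technical obstacle will be verifying, at each of the three triangle-finding steps, that the set under consideration has size $\gg \delta^2 n$, which is exactly the condition needed to apply Theorem~\ref{theorem indep set} with $k\approx \delta\sqrt{n\log n}$. Each such requirement reduces to a polynomial inequality of the shape $\delta^2\ll \gamma^{O(1)}/m$ (with the smallest set in play being of size $\approx \gamma^2 N=\gamma^2 n/m$), which is secured by the hierarchy $1/n\ll \delta<1/M'\ll\epsilon\ll\gamma$ once $\delta$ is chosen sufficiently small compared to $\gamma$ and $1/M'$.
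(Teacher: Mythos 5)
Your proof is correct and follows essentially the same route as the paper's: locate a triangle in one part via Theorem~\ref{theorem indep set} (justified by the small independence number), pass to the common neighborhood in the other parts (using the $2/3$ threshold and inclusion--exclusion to retain a $\Theta(\gamma)$-fraction, and the Slicing Lemma to keep regularity), repeat twice more, and assemble the three triangles into a $K_9$. The only cosmetic difference is that the paper first symmetrically refines all three parts to sets $V_i^*$ of high minimum crossing degree, whereas you clean up one side at a time as you descend; the underlying nested-neighborhood argument and size checks are identical.
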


\begin{proof}
	Suppose that $\{1,2,3\}$ spans a $(2/3+\gamma)$-heavy triangle in $R$. Then all pairs $(V_i,V_j)$ with $ij\in\binom{[3]}{2}$ are $\epsilon$-regular with $d_G(V_i,V_j)>2/3+\gamma$, so there exist $V_i^* \subseteq V_i$ for every $i \in [3]$ such that $|V_i^*| = (1-2\epsilon)|V_i|$ and $\delta^{cr}(G[V_1^*,V_2^*,V_3^*])\geq (2/3+\gamma/2)|V_i^*|$. We will work with these sets with high minimum crossing degree.

	We claim that $V_1^*$ contains a triangle, otherwise, by Theorem~\ref{theorem indep set}, $V_1^*$ would contain an independent set of size at least \begin{equation*}
		\frac{1}{2}\sqrt{|V_1^*|\log|V_1^*|}\geq\frac{1}{2} \sqrt{(1-2\epsilon)\frac{n}{m}\log\left((1-2\epsilon)\frac{n}{m}\right)}>\frac{1}{m}\sqrt{n\log n}\geq \alpha(G),
	\end{equation*} a contradiction. Suppose that $S$ spans a triangle in $V_1^*$, then each vertex in $S$ has at least $(2/3+\gamma/2)|V_i^*|$ neighbors in $V_i^*$ for $i \in \{2,3\}$ because $\delta^{cr}(G[V_1^*,V_2^*,V_3^*])\geq (2/3+\gamma/2)|V_i^*|$. Then the size of the intersection of the three neighborhoods, which is $|N(S,V_i^*)|$, is at least $\frac{3}{2}\gamma|V_i^*|\geq \gamma^2\frac{n}{m}$ for $i\in\{2,3\}$.
	
	Using Lemma~\ref{slicing lemma} and again deleting vertices of low degree, we could obtain $V_i' \subseteq N(S,V_i^*)$ for $i \in \{2,3\}$ satisfying that $|V'_2|=|V'_3| \geq \gamma^4\frac{n}{m}$, $\delta(G[V'_2,V'_3]) \geq (2/3 + \gamma/4)|V'_i|$ and $(V'_2,V'_3)$ is $(\sqrt{\epsilon}, 2/3+\gamma/3)$-regular. Again, by the low independence number condition, $V_2'$ must contain a triangle.
	
	Let $T$ be a triangle in $V_2'$, then we have $|N(T,V_3')| \geq \frac{3}{4}\gamma|V_3'| \geq \gamma^6\frac{n}{m}$. Again, by the low independence number condition, $N(T,V_3')$ contains a triangle, which together with $S$ and $T$ forms a $K_9$, a contradiction.
\end{proof}

Define a weight function on $E(R)$ as follows: If $ij\in E(R)$ satisfies $d_G(V_i,V_j)\in (2/3, 2/3+\gamma)$, then let $w(ij)\coloneqq 2/3$, otherwise, let $w(ij)\coloneqq d_G(V_i,V_j)$. By Claim~\ref{modified Claim 3.2a Q3} and Theorem~\ref{K_4-free-heavy}, \[
w(R)\leq \left(\frac{3}{10}+\frac{\gamma}{6}\right)m^2,
\] as we can always assume $M$, thus $m$, to be sufficiently large at the very first step. Let $a$ be the number of edges $ij\in E(R)$ with $d_G(V_i,V_j)\in (2/3, 2/3+\gamma)$. Then $a\leq m^2/2$ and every such edge $ij$ contributes $d_G(V_i,V_j)(n/m)^2\leq (2/3+\gamma)(n/m)^2=(w(ij)+\gamma)(n/m)^2$ edges to $G$. Notice that every other edge $ij\in E(R)$ contributes exactly $d_G(V_i,V_j)(n/m)^2=w(ij)(n/m)^2$ edges to $G$. Therefore, $E(R)$ contributes \[ \sum_{ij\in E(R)}d_G(V_i,V_j)\left(\frac{n}{m}\right)^2\leq 
w(R)\left(\frac{n}{m}\right)^2+a\gamma \left(\frac{n}{m}\right)^2\leq \left(\frac{3}{10}+\frac{2\gamma}{3}\right)n^2 \] edges to $G$. Combining with (\ref{nonedge of R}), \[
e(G)\leq \left(\frac{3}{10}+\gamma\right) n^2\] and we completed the proof of Theorem~\ref{theorem K_9}.

\subsection{$K_t$-free for $t=9, 10, 11$: Proof of Theorem~\ref{theorem K_9,10,11 density}}\label{9-11pf}

Let $t\in\{9,10,11\}$ and all the parameters be as in (\ref{parameter}). Let $G$ be an $n$-vertex $K_t$-free graph with $n\geq n_0$ and $\alpha(G)\leq \delta \Q(4,n)$. To prove Theorem~\ref{theorem K_9,10,11 density}, it suffices to prove that \[
e(G)\leq \frac{1}{4}n^2+\gamma n^2.\] Let $R\coloneqq R(\epsilon, \gamma/2)$ be the corresponding cluster graph. By the $p=3$ version of Lemma~\ref{lemma to derive K_4 free}, $R$ is $K_4$-free. Instead of considering \emph{$(2/3+\gamma)$-heavy} triangles in $R$, we will use \emph{$(3/4+\gamma)$-chubby} triangles: Recall that a triangle $ijk$ in $R$ is \textit{$(3/4+\gamma)$-chubby} if $d_G(V_{i'}, V_{j'}) > 3/4 + \gamma$ for some $i'j' \in \binom{\{i,j,k\}}{2}$.

\begin{claim}\label{modified Claim 3.2a}
No triangle in $R$ is $(3/4+\gamma)$-chubby.
\end{claim}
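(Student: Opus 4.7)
Mimic the approach of Claim~\ref{modified Claim 3.2a Q3}, but now aim to exhibit a $K_{11}\subseteq G$, which contains $K_t$ for every $t\in\{9,10,11\}$ and so handles all three cases simultaneously. Assume for contradiction that $\{1,2,3\}$ spans a chubby triangle in $R$, WLOG with $d_G(V_1,V_2)\geq 3/4+\gamma$; the remaining two pairs have density only $\geq\gamma/2$, which is the essential new difficulty compared to the heavy case. Apply the standard regularity cleanup to obtain $V_i^*\subseteq V_i$ with $|V_i^*|\geq (1-2\epsilon)|V_i|$, $\delta^{cr}(G[V_1^*,V_2^*])\geq (3/4+\gamma/2)|V_j^*|$, and $\delta^{cr}(G[V_i^*,V_3^*])\geq(\gamma/4)|V_j^*|$ for $i\in\{1,2\}$.

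Because the crossing degree into $V_3^*$ is only $\sim\gamma$, the naive inclusion-exclusion bound used in Claim~\ref{modified Claim 3.2a Q3} cannot force a linear-size common neighborhood in $V_3^*$ of a small tuple from $V_1^*\cup V_2^*$. To get around this, apply Lemma~\ref{lemma 3.1 4-partite generalized} with $k=3$, $Z_1=V_1^*$, $Z_2=V_2^*$, $Z_3=V_3^*$, and $t=3$, producing a set $S_3\subseteq V_3^*$ of size $|V_3^*|^{1-c}/2$ (for a small constant $c>0$) such that every $3$-tuple $T\in\binom{S_3}{3}$ satisfies $|N(T,V_i^*)|\geq(\gamma/4)^{16/c}|V_i^*|=\Omega(n)$ for $i\in\{1,2\}$. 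Using $\Q(4,n)\leq n^{2/5}(\log n)^{4/5}$, one checks that $\delta\Q(4,n)\ll\sqrt{|S_3|\log|S_3|}$ for small $c$, and so Theorem~\ref{theorem indep set} applied to $G[S_3]$, together with $\alpha(G[S_3])\leq\delta\Q(4,n)$, produces a triangle $T_3\subseteq S_3$.

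Let $V_1'\coloneqq N(T_3,V_1^*)$ and $V_2'\coloneqq N(T_3,V_2^*)$; both are of linear size in $n$. The slicing lemma yields that $(V_1',V_2')$ is $(\epsilon',3/4+\gamma/3)$-regular, and after a further degree-based cleanup we have $\delta^{cr}(G[V_1',V_2'])\geq (3/4+\gamma/4)|V_j'|$. Since $|V_1'|=\Theta(n)$ and $\Q(4,\cdot)$ scales polynomially, the low-independence hypothesis gives $\alpha(G[V_1'])\leq\delta\Q(4,n)<\Q(4,|V_1'|)$, so $V_1'$ contains a $K_4$ denoted $T_1$. The high crossing degree then yields
\[
|N(T_1,V_2')|\geq|V_2'|-4\bigl(1/4-\gamma/4\bigr)|V_2'|=\gamma|V_2'|=\Omega(n),
\]
and a second application of the low-independence hypothesis inside $N(T_1,V_2')$ produces a $K_4$ denoted $T_2$.

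Finally, $T_3\cup T_1\cup T_2$ induces a $K_{11}$: the within-part subgraphs are $K_3$, $K_4$, $K_4$, and the three cross-part bipartite graphs are complete because $T_1\subseteq V_1'=N(T_3,V_1^*)$, $T_2\subseteq V_2'=N(T_3,V_2^*)$, and $T_2\subseteq N(T_1,V_2')$. This contradicts $K_t$-freeness of $G$ for each $t\in\{9,10,11\}$. The principal technical obstacle is to calibrate $c,\epsilon,\delta$ with the (not-precisely-known) scaling of $\Q(4,\cdot)$ so that Lemma~\ref{lemma 3.1 4-partite generalized} applies, $S_3$ is still large enough for Shearer's bound to supply $T_3$ under $\alpha(G)\leq\delta\Q(4,n)$, and the two subsequent $K_4$-extractions inside linear-size subsets go through.
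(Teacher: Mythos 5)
Your proof is correct and follows essentially the same route as the paper's: apply the dependent random choice lemma (Lemma~\ref{lemma 3.1 4-partite generalized}) with $k=3$, $t=3$ to find a large set in the cluster class not incident to the high-density pair, extract a triangle there via Shearer's bound, restrict to its common neighborhood in the remaining two classes via the Slicing Lemma, then pull out two $K_4$'s from the high-density pair to assemble a $K_{11}$. The only cosmetic difference is vertex labeling (you place the heavy pair at $(V_1,V_2)$ and run DRC into $V_3^*$; the paper uses $(V_2,V_3)$ and $V_1^*$). One genuine improvement on your side: you leave the DRC parameter $c$ unfixed and explicitly flag that it must be chosen small enough so that $\delta\Q(4,n)\ll\sqrt{|S_3|\log|S_3|}$, given only $\Q(4,n)=O(n^{2/5}(\log n)^{4/5})$. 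The paper hard-codes $c=1/5$, for which $|S|\asymp n^{4/5}$ while $\alpha(G)^2$ can be as large as $\delta^2 n^{4/5}(\log n)^{8/5}$, so the stated comparison $\tfrac13(n/m)^{4/5}>\alpha(G)^2$ actually fails for large $n$; taking $c<1/5$ (as your more cautious formulation implicitly allows) repairs this, and this is the more careful treatment.
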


\begin{proof}
Suppose that $\{1,2,3\}$ spans a $(3/4+\gamma)$-chubby triangle in $R$ with $d(V_2,V_3)>3/4 + \gamma$. Since all pairs $(V_i,V_j)$ for $ij\in\binom{[3]}{2}$ are $\epsilon$-regular, we have that for each $i \in [3]$, there exists $V_i^* \subseteq V_i$ such that $|V_i^*| = (1-2\epsilon)|V_i|$ and $\delta^{cr}(G[V_1^*,V_2^*,V_3^*])\geq \gamma|V_i^*|/3$. We will find a $K_{11}$ in these sets with high minimum crossing degree, using the dependent random choice method.

Apply Lemma~\ref{lemma 3.1 4-partite generalized} to $G[V_1^*,V_2^*,V_3^*]$ with $k=3, c=1/5$ and $t=3$. This gives us a set $S \subseteq V_1^*$ of size $\frac{1}{2}|V_1^*|^{4/5}\geq\frac{1}{3}(\frac{n}{m})^{4/5}$ such that every triple $P \in \binom{S}{3}$ satisfies $|N(P,V_i^*)| \geq (\frac{\gamma}{3})^{80}|V_i^*|\geq \gamma^{82}\frac{n}{m}$ for each $i \in \{2,3\}$. Recall that $\alpha(G)\leq \delta \Q(4,n)$ and $\Q(4,n)=O(n^{2/5})$, so $\frac{1}{3}(\frac{n}{m})^{4/5}>(\alpha(G))^2$. Therefore, $S$ contains a triangle $uvw$ with $|N(\{u,v,w\},V_i^*)|\geq \gamma^{82}\frac{n}{m}$ for each $i \in \{2,3\}$.

By applying Lemma~\ref{slicing lemma} and deleting all vertices of low degree if necessary, we could get subsets $V_i' \subseteq N(\{u,v,w\},V_i^*)$ for $i \in \{2,3\}$ such that $|V'_2|=|V'_3| \geq \gamma^{84}\frac{n}{m}$, $\delta(G[V'_2,V'_3]) \geq (3/4 + \gamma/5)|V'_i|$ and $(V'_2,V'_3)$ is $(\sqrt{\epsilon}, 3/4+\gamma/4)$-regular. We claim that $V'_2$ contains a $K_4$. Otherwise, by the definition of inverse Ramsey number, we could always choose $\delta$ small enough such that there exists an independent set of size at least 
\[ \Q\left(4, |V'_2|\right) \geq 
\Q\left(4,\frac{\gamma^{84}}{m}n\right)> \delta \Q(4,n) \geq \alpha(G),
\] a contradiction.

Let $T$ be a $K_4$ in $V_2'$, then $|N(T,V_3')| \geq \frac{4\gamma}{5}|V_3'| \geq \gamma^{86}\frac{n}{m}$ since $\delta(G[V'_2,V'_3]) \geq (3/4 + \gamma/5)|V'_i|$. Again, by the low independence number condition, $N(T,V_3')$ contains a $K_4$, which together with $T$ and $uvw$ forms a $K_{11}\supseteq K_t$, a contradiction.
\end{proof}


	
Let $a$ be the number of edges in $R$ contained in some triangle, then $a\leq m^2/2$. Let $b\coloneqq e(R)-a$. Recall that $R$ is $K_4$-free. By Theorem~\ref{cliqueweighting},
\[ a\cdot\frac{3}{4}+b\leq \left(\frac{1}{4}+\frac{\gamma}{6}\right)m^2.
\]
By Claim~\ref{modified Claim 3.2a}, every edge $ij\in E(R)$ contained in some triangle satisfies that $d_G(V_i,V_j)\leq 3/4+\gamma$. Therefore, $E(R)$ contributes \begin{equation*}
	\begin{aligned}
			\sum_{ij\in E(R)}d_G(V_i,V_j)\left(\frac{n}{m}\right)^2&\leq a\cdot\left(\frac{3}{4}+\gamma\right)\left(\frac{n}{m}\right)^2+b\cdot\left(\frac{n}{m}\right)^2\\
			&\leq\left(\frac{1}{4}+\frac{\gamma}{6}\right)n^2+a\gamma\left(\frac{n}{m}\right)^2\leq \left(\frac{1}{4}+\frac{2\gamma}{3}\right)n^2
	\end{aligned}
	\end{equation*} edges to $G$. Combining with (\ref{nonedge of R}), \[
e(G)\leq \left(\frac{1}{4}+\gamma\right) n^2\] as desired.

\subsection{$K_{12}$-free: Proof of Theorem~\ref{theorem K_12 density}}\label{12pf}

The proof for this case is similar to the one in Section 3.1, with the use of \emph{$(3/4+\gamma)$-heavy} triangles instead of \emph{$(2/3+\gamma)$-heavy} triangles. Let all the parameters be as in (\ref{parameter}). Let $G$ be an $n$-vertex $K_{12}$-free graph with $\alpha(G)\leq \delta \Q(4,n)$ and $R\coloneqq R(\epsilon, \gamma/2)$ be the corresponding cluster graph. Note that $R$ is $K_4$-free by the $p=3$ version of Lemma~\ref{lemma to derive K_4 free}. Our aim now is to prove that \[
e(G)\leq \frac{4}{13}n^2+\gamma n^2.\]

\begin{claim}\label{Claim-heavy-triangle}
No triangle in $R$ is $(3/4+\gamma)$-heavy.
\end{claim}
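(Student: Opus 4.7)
The proof of Claim~\ref{Claim-heavy-triangle} will follow the same template as Claim~\ref{modified Claim 3.2a} in Section~\ref{9-11pf}, but exploits the stronger hypothesis (\emph{all three} pairs of the triangle have density at least $3/4+\gamma$, rather than just one) to embed a $K_{12}$ instead of a $K_{11}$. The key observation is that the heaviness assumption makes the minimum crossing degree among $V_1,V_2,V_3$ so large that Lemma~\ref{lemma 3.1 4-partite generalized} is no longer necessary: one can find three $K_4$'s, one in each cluster, directly from the inverse Ramsey condition $\alpha(G)\leq\delta\Q(4,n)$, at the cost that all three input sets stay linear in $n$. These three $K_4$'s will be connected by all possible edges across pairs, producing a $K_{12}\subseteq G$ and the desired contradiction.

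Concretely, assume $\{1,2,3\}$ spans a heavy triangle. First, by the standard deletion of low-degree vertices from each of the three $\epsilon$-regular pairs, I obtain $V_i^*\subseteq V_i$ with $|V_i^*|=(1-2\epsilon)|V_i|$ and $\delta^{cr}(G[V_1^*,V_2^*,V_3^*])\geq (3/4+\gamma/2)|V_i^*|$. Because $|V_1^*|$ is linear in $n$ and $\delta$ is small in the hierarchy~(\ref{parameter}), the assumption $\alpha(G)\leq\delta\Q(4,n)$ forces $V_1^*$ to contain a copy of $K_4$, which I call $T_1$.

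Second, by the union-bound inequality $|A_1\cap\cdots\cap A_4|\geq|A_1|+\cdots+|A_4|-3|U|$ applied to the neighborhoods in $V_i^*$ of the four vertices of $T_1$, I obtain $|N(T_1,V_i^*)|\geq(4(3/4+\gamma/2)-3)|V_i^*|=2\gamma|V_i^*|$ for $i\in\{2,3\}$. An application of Lemma~\ref{slicing lemma} followed by one more round of low-degree deletion inside this pair yields $V_2',V_3'$ with $|V_2'|=|V_3'|$ still linear in $n$, $(V_2',V_3')$ being $(\sqrt{\epsilon},\,3/4+\gamma/3)$-regular, and $\delta(G[V_2',V_3'])\geq(3/4+\gamma/4)|V_i'|$. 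The same inverse Ramsey argument now produces a $K_4=T_2$ in $V_2'$, and then $|N(T_2,V_3')|\geq(4(3/4+\gamma/4)-3)|V_3'|=\gamma|V_3'|$ is again linear in $n$ and hence contains a $K_4=T_3$.

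By construction every vertex of $T_3$ is adjacent to every vertex of $T_1\cup T_2$, and every vertex of $T_2$ is adjacent to every vertex of $T_1$, so $T_1\cup T_2\cup T_3$ forms a $K_{12}$ in $G$, contradicting the $K_{12}$-freeness and proving the claim. I do not foresee a genuine obstacle; the main thing to track carefully is the bookkeeping of the slicing and low-degree deletion steps so that each of the three successive applications of the inverse Ramsey condition really does receive a set of size $\Omega_{\gamma,m}(n)$. This is guaranteed by the hierarchy $\delta<1/M'\ll\epsilon\ll\gamma\ll 1$, since then $\delta\,\Q(4,n)<\Q(4,c\,n)$ for the relevant constant $c=c(\gamma,m)$ coming from the above calculations.
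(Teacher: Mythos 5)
Your proof is correct and follows essentially the same route as the paper: delete low-degree vertices to get $V_i^*$ with $\delta^{cr}\geq(3/4+\gamma/2)|V_i^*|$, extract a $K_4$ from $V_1^*$ by the inverse Ramsey condition, pass to the common neighborhood, slice, repeat in $V_2'$ and then in $N(T_2,V_3')$, and assemble a $K_{12}$. Your observation that no dependent random choice is needed here (because all three pair densities exceed $3/4$) matches the paper exactly, and your union-bound computations of the common-neighborhood sizes reproduce the paper's $2\gamma|V_i^*|$ and $\gamma|V_3'|$ bounds.
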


\begin{proof}
Suppose that $\{1,2,3\}$ spans a $(3/4+\gamma)$-heavy triangle in $R$. Then all pairs $(V_i,V_j)$ with $ij\in\binom{[3]}{2}$ are $\epsilon$-regular with $d_G(V_i,V_j)>3/4+\gamma$, so there exist subsets $V_i^* \subseteq V_i$ for every $i \in [3]$ such that $|V_i^*| = (1-2\epsilon)|V_i|$ and $\delta^{cr}(G[V_1^*,V_2^*,V_3^*])\geq (3/4+\gamma/2)|V_i^*|$. We will still use these sets with high minimum crossing degree.

We claim that $V_1^*$ contains a $K_4$. Otherwise, by the definition of inverse Ramsey number, $V_1^*$ contains an independent set of size at least \begin{equation*}
\Q(4,|V_1^*|)=\Q\left(4,\frac{1-2\epsilon}{m}n\right)>\delta \Q(4,n)\geq \alpha(G),
\end{equation*} a contradiction. Suppose that $S$ spans a $K_4$ in $V_1^*$. Then the size of  $|N(S,V_i^*)|$ is at least $2\gamma|V_i^*|\geq \gamma^2\frac{n}{m}$ for $i\in\{2,3\}$ since $\delta^{cr}(G[V_1^*,V_2^*,V_3^*])\geq (3/4+\gamma/2)|V_i^*|$.

Using Lemma~\ref{slicing lemma} and again deleting vertices of low degree, we obtain $V_i' \subseteq N(S,V_i^*)$ for $i \in \{2,3\}$ satisfying that $|V'_2|=|V'_3| \geq \gamma^4\frac{n}{m}$, $\delta(G[V'_2,V'_3]) \geq (3/4 + \gamma/4)|V'_i|$ and $(V'_2,V'_3)$ is $(\sqrt{\epsilon}, 3/4+\gamma/3)$-regular. Again, by the low independence number condition, $V_2'$ must contain a $K_4$.

Let $T$ be a $K_4$ in $V_2'$, then we have $|N(T,V_3')| \geq \gamma|V_3'| \geq \gamma^5\frac{n}{m}$. Again, by the low independence number condition, $N(T,V_3')$ contains a $K_4$, which together with $S$ and $T$ forms a $K_{12}$, a contradiction.
\end{proof}

Define a weight function on $E(R)$ as follows: If $ij\in E(R)$ satisfies $d_G(V_i,V_j)\in (3/4, 3/4+\gamma]$, then $w(ij)\coloneqq 3/4$, otherwise, let $w(ij)\coloneqq d_G(V_i,V_j)$. By Claim~\ref{Claim-heavy-triangle} and Theorem~\ref{K_4-free-heavy}, \[
w(R)\leq \left(\frac{4}{13}+\frac{\gamma}{6}\right)m^2.
\] Let $a$ be the number of edges $ij\in E(R)$ with $d_G(V_i,V_j)\in (3/4, 3/4+\gamma)$. Then $a\leq m^2/2$ and every such edge $ij$ contributes $d_G(V_i,V_j)(n/m)^2\leq (3/4+\gamma)(n/m)^2=(w(ij)+\gamma)(n/m)^2$ edges to $G$. Notice that every other edge $ij\in E(R)$ contributes exactly $d_G(V_i,V_j)(n/m)^2=w(ij)(n/m)^2$ edges to $G$. Therefore, $E(R)$ contributes \[ \sum_{ij\in E(R)}d_G(V_i,V_j)\left(\frac{n}{m}\right)^2\leq 
w(R)\left(\frac{n}{m}\right)^2+a\gamma \left(\frac{n}{m}\right)^2\leq \left(\frac{4}{13}+\frac{2\gamma}{3}\right)n^2 \] edges to $G$. Combining with (\ref{nonedge of R}), we completed the proof of Theorem~\ref{theorem K_12 density}.

\subsection{$K_{13}$-free: Proof of Theorem~\ref{K_13 density}}

Let all the parameters be as in (\ref{parameter}). Let $G$ be an $n$-vertex $K_{13}$-free graph with $\alpha(G)\leq \delta \Q(5,n)$ and $R\coloneqq R(\epsilon, \gamma/2)$ be the corresponding cluster graph. Our aim now is to prove \[
e(G)\leq \frac{4}{15}n^2+\gamma n^2.\] Recall that by the $p=3$ version of Lemma~\ref{lemma to derive K_4 free}, $R$ is $K_5$-free. Similarly as in Sections~\ref{9pf}, \ref{9-11pf} and \ref{12pf}, we will first prove that $R$ contains no $K_4$ and then analyze the triangles in $R$.

\begin{claim}\label{chubby K4}
	$R$ is $K_4$-free.
\end{claim}

\begin{proof}
	The proof is similar to the proof of Claim~\ref{cluster_k4-free} where we apply the dependent random choice method. The only difference is the choice of $c$ and $t$ in the application of Lemma~\ref{lemma 3.1 4-partite generalized}.
	
	Suppose that $\{1,2,3,4\}$ spans a $K_4$ in $R$. Then, for each $i \in [4]$, there exists a subset $V_i^* \subseteq V_i$ such that $|V_i^*| = (1-3\epsilon)|V_i|$ and $\delta^{cr}(G[V_1^*,V_2^*,V_3^*, V_4^*])\geq \gamma|V_i^*|/4$. Applying Lemma~\ref{lemma 3.1 4-partite generalized} to $G[V_1^*,V_2^*,V_3^*,V_4^*]$ with $k=4, c=1/5$ and $t=3$ gives us a set $S \subseteq V_1^*$ of size $\frac{1}{2}|V_1^*|^{4/5}\geq \frac{1}{3}(\frac{n}{m})^{4/5}$ such that every triple $P \in \binom{S}{3}$ satisfies $|N(P,V_i^*)| \geq (\frac{\gamma}{4})^{120}\frac{n}{m}\geq \gamma^{121}\frac{n}{m}$ for each $i \in \{2,3,4\}$. Recall that $\alpha(G)\leq \delta \Q(5,n)$ and $\Q(5,n)=O(n^{1/3})$, so $\frac{1}{3}(\frac{n}{m})^{4/5}> (\alpha(G))^2$. Therefore, $S$ contains a triangle $uvw$ with $|N(\{u,v,w\},V_i^*)|\geq \gamma^{121}\frac{n}{m}$ for each $i \in \{2,3,4\}$.
	
	By applying Lemma~\ref{slicing lemma} and deleting all vertices of low degree if necessary, we could get subsets $V_i' \subseteq N(\{u,v,w\},V_i^*)$ for $i \in \{2,3,4\}$ satisfying that $|V'_2|=|V'_3|=|V'_4| \geq \gamma^{123}\frac{n}{m}$, $\delta^{cr}(G[V'_2,V'_3,V'_4]) \geq  \gamma|V'_i|/5$, and $(V'_i,V'_j)$ is $(\sqrt{\epsilon}, \gamma/4)$-regular for every pair $\{i,j\}\in \binom{\{2,3,4\}}{2}$. We apply Lemma~\ref{lemma 3.1 4-partite generalized} to $G[V'_2,V'_3,V'_4]$ with $k=3, c=1/5$ and $t=3$. This gives us a set $S' \subseteq V'_2$ of size $\frac{1}{2}|V'_2|^{4/5}\geq \gamma^{124}(\frac{n}{m})^{4/5}$ such that every triple $P \in \binom{S'}{3}$ satisfies $|N(P,V'_i)| \geq (\frac{\gamma}{5})^{80}|V'_i|\geq \gamma^{204}\frac{n}{m}$ for each $i \in \{3,4\}$. Since $\gamma^{124}(\frac{n}{m})^{4/5}>(\alpha(G))^2$, $S'$ contains a triangle $xyz$ with $|N(\{x,y,z\},V'_i)|\geq \gamma^{204}\frac{n}{m}$ for each $i \in \{3,4\}$. 
	
	Again, by applying Lemma~\ref{slicing lemma} and deleting all vertices of low degree if necessary, we could get $V_i'' \subseteq N(\{x,y,z\},V'_i)$ for $i \in \{3,4\}$ such that $|V''_3|=|V''_4| \geq \gamma^{206}\frac{n}{m}$, $\delta(G[V''_3,V''_4])\geq \gamma|V''_i|/6$ and $(V''_3,V''_4)$ is $(\epsilon^{1/4}, \gamma/5)$-regular. We apply Lemma~\ref{lemma 3.1 4-partite generalized} once more to $G[V''_3,V''_4]$ with $k=2, c=1/5$ and $t=3$. This gives us a set $S''\subseteq V''_3$ of size $\frac{1}{2}|V''_3|^{4/5}\geq \gamma^{207}(\frac{n}{m})^{4/5}$ such that every triple $P \in \binom{S''}{3}$ satisfies $|N(P,V''_4)| \geq (\frac{\gamma}{6})^{40}|V''_3|\geq \gamma^{247}\frac{n}{m}$. Since $\gamma^{207}(\frac{n}{m})^{4/5}>(\alpha(G))^2$, $S''$ contains a triangle $abc$ with $|N(\{a,b,c\},V''_4)| \geq \gamma^{247}\frac{n}{m}$. Note that $\Q\left(5,\frac{\gamma^{247}}{m}n\right)>\delta\Q(5,n)\geq \alpha(G)$. Therefore, $N(\{a,b,c\},V''_4)$ contains a $K_5$, which together with $uvw, xyz$ and $abc$ forms a $K_{14}\supseteq K_{13}$, a contradiction.
\end{proof}

Recall that a triangle $ijk$ in $R$ is \textit{$(4/5+\gamma)$-chubby} if $d_G(V_{i'}, V_{j'})>4/5 + \gamma$ for some $i'j' \in \binom{\{i,j,k\}}{2}$.

\begin{claim}\label{K_12-free chubby triangle}
No triangle in $R$ is $(4/5+\gamma)$-chubby.
\end{claim}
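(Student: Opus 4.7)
The plan is to mirror the structure of Claim~\ref{modified Claim 3.2a} and Claim~\ref{chubby K4}: assume for contradiction that $\{1,2,3\}$ spans a chubby triangle in $R$ with $d_G(V_2,V_3)\geq 4/5+\gamma$, and exhibit a $K_{13}$ in $G$. The arithmetic $3+5+5=13$ motivates the strategy: find a triangle in a cleaned-up piece of $V_1$, a $K_5$ in the resulting common neighborhood in $V_2$, and another $K_5$ in the subsequent common neighborhood in $V_3$.

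First I would pass to $V_i^*\subseteq V_i$ with $|V_i^*|=(1-2\epsilon)|V_i|$ so that $\mcd(V_1^*,V_2^*,V_3^*)\geq \gamma|V_i^*|/4$, with the stronger bound $(4/5+\gamma/2)|V_i^*|$ across the pair $(V_2^*,V_3^*)$. Then I would apply Lemma~\ref{lemma 3.1 4-partite generalized} to $G[V_1^*,V_2^*,V_3^*]$ with $k=3$, $c=1/5$, $t=3$, yielding a set $S\subseteq V_1^*$ of size at least $\tfrac{1}{3}(n/m)^{4/5}$ in which every triple has common neighborhood of size at least $\gamma^{c_1}\,n/m$ in each of $V_2^*$ and $V_3^*$, for an absolute constant $c_1$. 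Since $\alpha(G)\leq \delta\Q(5,n)=O(n^{1/3+o(1)})$, we have $|S|\gg \alpha(G)^2/\log\alpha(G)$, so Theorem~\ref{theorem indep set} produces a triangle $uvw\subseteq S$ whose common neighborhoods in $V_2^*$ and $V_3^*$ remain of order $\gamma^{c_1}\,n/m$.

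Next I would apply the Slicing Lemma (and delete vertices of low degree) to obtain $V_i'\subseteq N(\{u,v,w\},V_i^*)$ for $i\in\{2,3\}$ with $|V_2'|=|V_3'|\geq \gamma^{c_2}\,n/m$, such that $(V_2',V_3')$ is $(\sqrt{\epsilon},4/5+\gamma/3)$-regular and $\delta(G[V_2',V_3'])\geq (4/5+\gamma/4)|V_i'|$. By the low-independence hypothesis and the polynomial growth of $\Q(5,\cdot)$, the set $V_2'$ is too large to be $K_5$-free (otherwise it would contain an independent set of size $\Q(5,|V_2'|)>\delta\Q(5,n)\geq \alpha(G)$), and so contains a $K_5$, say $T$. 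The minimum degree bound then gives $|N(T,V_3')|\geq 5(4/5+\gamma/4)|V_3'|-4|V_3'|=(5\gamma/4)|V_3'|\geq \gamma^{c_3}\,n/m$, and the same argument produces a $K_5$, say $T'$, in $N(T,V_3')$. Then $\{u,v,w\}\cup V(T)\cup V(T')$ spans a $K_{13}$ in $G$, the desired contradiction.

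The main obstacle is the parameter balance. The DRC exponent $c=1/5$ yields $|S|=n^{4/5-o(1)}$, which dominates $\alpha(G)^2=n^{2/3+o(1)}$ only because $\Q(5,n)=O(n^{1/3+o(1)})$; and each of the three subsequent clique extractions requires its host set to exceed $\alpha(G)$, which holds because every drop in size is by a factor of the form $\gamma^{O(1)}$ while $\delta\ll\gamma$. The arithmetic $3+5+5=13$ is tight, leaving no slack in the clique count, which is precisely why $4/5+\gamma$ is the right density threshold for the chubby triangle definition in this regime.
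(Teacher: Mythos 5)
Your proposal is correct and follows essentially the same route as the paper, which simply remarks that this claim ``is almost the same as Claim~\ref{modified Claim 3.2a}, with $\Q(4,\cdot)$ replaced by $\Q(5,\cdot)$'' and yields a triangle in $V_1^*$ plus two $K_5$'s in $V_2'$ and $N(T,V_3')$, forming the $K_{13}$. You have merely fleshed out the details of that parallel argument (the DRC step with $k=3,\ c=1/5,\ t=3$, the slicing, and the degree count $|N(T,V_3')|\geq (5\gamma/4)|V_3'|$), and your observation that $|S|=\Theta(n^{4/5})$ comfortably exceeds $\alpha(G)^2=O(n^{2/3+o(1)})$ correctly justifies extracting the triangle from $S$ via Theorem~\ref{theorem indep set}.
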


\begin{proof}
	The proof is almost the same as of Claim~\ref{modified Claim 3.2a}, with $\Q(4,\cdot)$ replaced by $\Q(5,\cdot)$. Thus we could find a triangle in $V^*_1$ and two copies of $K_5$ in $V'_2, V'_3$ respectively, together forming a $K_{13}$, which leads to a contradiction.
\end{proof}

Let $a$ be the number of edges contained in some triangle, then $a\leq m^2/2$. Let $b\coloneqq e(R)-a$. By Claim~\ref{chubby K4} and Theorem~\ref{K_5-free_chubby}, \[
a\cdot \frac{4}{5} + b \leq \left(\frac{4}{15}+\frac{\gamma}{6}\right) m^2.
\] By Claim~\ref{K_12-free chubby triangle}, every edge $ij\in E(R)$ contained in some triangle satisfies that $d_G(V_i, V_j)<4/5+\gamma$. Therefore, $E(R)$ contributes at most \begin{equation*}
		a\left(\frac{4}{5}+\gamma\right)\left(\frac{n}{m}\right)^2+b\left(\frac{n}{m}\right)^2\leq\left(\frac{4}{15}+\frac{\gamma}{6}+\frac{a\gamma}{m^2}\right)n^2 \leq \left(\frac{4}{15}+\frac{2}{3}\gamma\right)n^2
\end{equation*} edges to $G$. Combining with (\ref{nonedge of R}), we completed the proof of Theorem~\ref{K_13 density}.

\subsection{An improved result for $K_{13}$-free case}\label{subsec13}

Now we prove Theorem~\ref{K_13 with assumption density} by assuming that Conjecture~\ref{Ramsey conj} holds for $\ell=5$. 
We only need to slightly modify the proof of Theorem~\ref{K_13 density}. Let all the parameters be as in (\ref{parameter}). Let $G$ be a $K_{13}$-free graph on $n \geq n_0$ vertices with $\alpha(G) \leq \delta \Q(5,n)$ and $R\coloneqq R(\epsilon, \gamma/2)$ be the corresponding cluster graph. Since we assume that Conjecture~\ref{Ramsey conj} holds for $\ell=5$, $R$ is $K_4$-free by the $p=4$ version of Lemma~\ref{lemma to derive K_4 free}. By an argument similar to the proof of Claim~\ref{chubby K4}, we have the following claim.
    
    \begin{claim}
    	$R$ is triangle-free.
    \end{claim}

    \begin{proof}
	Since we assume the $\ell=5$ case of Conjecture~\ref{Ramsey conj} is true now, instead of finding a triangle in $V_1$, we find a $K_4$ in $V^*_1$. By applying the dependent random choice method one more time, we can still find a $K_4$ and a $K_5$ in $V'_2, V'_3$ respectively, which would force to have a $K_{13}$ in $G$, leading to a contradiction.
\end{proof}

    Now we apply Theorem~\ref{cliqueweighting} instead of Theorem~\ref{K_5-free_chubby} and get the required result.

\section{Concluding Remarks}

In our paper, we considered the following general problem:

\begin{problem}\label{prob}
	Given an integer $s$ and a function $f(n)$, what is $\RT(n, K_s, f(n))$, i.e.~the maximum number of edges in an $n$-vertex $K_s$-free graph with independence number at most $f(n)$?
\end{problem}

One obstacle to solving Problem~\ref{prob} is Conjecture~\ref{Ramsey conj}: it is not known if there is a jump of order $n^c$ between different off-diagonal Ramsey numbers, where $c$ is a constant. Conditioning on that Conjecture~\ref{Ramsey conj} holds, Theorem 3.9 in~\cite{BHS} answers Problem~\ref{prob} when $f(n)=\Q(p,n)$.

\begin{theorem}[\cite{BHS}, Theorem 3.9]\label{bhs-general}
	If $r=\lfloor\frac{s-1}{p-1}\rfloor$ and Conjecture~\ref{Ramsey conj} holds for $\ell=p$, then \[\rho\tau(K_s,\Q(p,n))=\frac{1}{2}\left(1-\frac{1}{r}\right).\]
\end{theorem}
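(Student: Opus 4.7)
The plan is to establish the equality via matching upper and lower bounds: the upper bound comes from a Regularity Lemma plus Tur\'an argument in the spirit of the proofs of Theorems~\ref{theorem K_9,10,11 density} and~\ref{K_13 with assumption density}, and the lower bound from a blow-up construction in the spirit of the accompanying remarks in Section~1.

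For the upper bound, let $G$ be an $n$-vertex $K_s$-free graph with $\alpha(G)\leq \Q(p,n)$ and fix parameters $\epsilon\ll \gamma\ll 1$ as in~(\ref{parameter}). Applying Lemma~\ref{regularity lemma} to $G$ and forming the cluster graph $R\coloneqq R(\epsilon,\gamma/2)$ on $m$ vertices, I observe that $r=\lfloor(s-1)/(p-1)\rfloor$ gives $s\leq (p-1)(r+1)<p(r+1)$. Hence a $K_{r+1}$ in $R$ would, by the $q=r+1$ case of Lemma~\ref{lemma to derive K_4 free} (which is licensed here by the assumed Conjecture~\ref{Ramsey conj} for $\ell=p$), produce a $K_{p(r+1)}\supseteq K_s$ in $G$, a contradiction. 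So $R$ is $K_{r+1}$-free, and Tur\'an's theorem gives $e(R)\leq \frac{1}{2}(1-1/r)m^2$. Each edge $ij\in E(R)$ contributes at most $(n/m)^2$ edges to $G$, and the edges of $G$ not corresponding to $E(R)$ contribute at most $\gamma n^2/3$ by the estimate~(\ref{nonedge of R}); summing and letting $\gamma\to 0$ gives $\rho\tau(K_s,\Q(p,n))\leq \frac{1}{2}(1-1/r)$.

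For the matching lower bound, I would partition $V=A_1\cup\cdots\cup A_r$ with $|A_i|=n/r$ and, inside each $A_i$, place a $K_p$-free graph $H_i$ with $\alpha(H_i)=o(\Q(p,n))$ and $e(H_i)=o(n^2)$; the existence of such sparse Ramsey-type blocks follows from the first moment method, exactly as invoked in the remarks of Section~1. Joining every pair of vertices in distinct $A_i$'s produces a graph $G$ with $e(G)=|E(T(n,r))|+o(n^2)=\frac{1}{2}(1-1/r)n^2+o(n^2)$. Since $H_i$ is $K_p$-free, any clique of $G$ occupies at most $p-1$ vertices per $A_i$, so $\omega(G)\leq (p-1)r\leq s-1$ and $G$ is $K_s$-free; and any independent set of $G$ lies inside a single $A_i$, so $\alpha(G)=\max_i\alpha(H_i)=o(\Q(p,n))$.

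The main obstacle is the upper bound's reliance on Lemma~\ref{lemma to derive K_4 free}: its stated hypothesis requires $\alpha(G)<\Q(p,n\cdot 2^{-\omega(n)\log^{1-1/(r+1)}n})$, which is strictly stronger than $\alpha(G)\leq \Q(p,n)$, and closing this logarithmic-type gap is exactly what Conjecture~\ref{Ramsey conj} for $\ell=p$ purchases, through a polynomial separation between $\R(p-1,\cdot)$ and $\R(p,\cdot)$. A secondary technical point is verifying, uniformly in $p$, the existence of the sparse $K_p$-free blocks $H_i$ with $\alpha(H_i)=o(\Q(p,n))$ and $e(H_i)=o(n^2)$; this can be handled via the alteration method on $G(n/r,q)$ with $q$ tuned just below the $K_p$ threshold and one edge deleted from each remaining $K_p$.
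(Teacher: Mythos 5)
This statement is quoted from \cite{BHS} (Theorem 3.9); the present paper does not supply a proof, so there is no in-paper argument to compare against. Your overall strategy (Regularity Lemma plus a cluster-graph Tur\'an bound for the upper bound, and a complete $r$-partite blow-up of sparse $K_p$-free Ramsey graphs for the lower bound) is the standard one and the lower-bound construction is essentially correct. However, the upper bound as written has a genuine gap that you have noticed but not actually closed.

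You invoke the $q=r{+}1$ case of Lemma~\ref{lemma to derive K_4 free} with the \emph{same} parameter $p$, which requires $\alpha(G)<\Q\bigl(p,\, n\,2^{-\omega(n)\log^{1-1/q} n}\bigr)$, and you assert that Conjecture~\ref{Ramsey conj} ``purchases'' the passage from the available hypothesis $\alpha(G)\leq\Q(p,n)$ to this stronger one. That does not work: the gap you are facing is between $\Q(p,n)$ and $\Q(p,n')$ for a slightly smaller $n'=n^{1-o(1)}$ --- the same $p$, different argument --- while the conjecture only compares $\R(p-1,\cdot)$ with $\R(p,\cdot)$. Those quantities diverge from each other ($\Q(p,n)/\Q(p,n')\to\infty$), and no hypothesis on the ratio $\R(p,m)/\R(p-1,m)$ controls it. What the conjecture \emph{does} buy is the $p{-}1$ case of the lemma: from $\R(p-1,m)\leq\R(p,m)/m^c$ and $\Q(p,n)\geq n^{1/(p-1)-o(1)}$ one gets $\Q(p,n)<\Q\bigl(p-1,\, n\,2^{-\omega(n)\log^{1-1/(r+1)}n}\bigr)$ for large $n$, so that $\alpha(G)\leq\Q(p,n)$ \emph{does} verify the hypothesis of Lemma~\ref{lemma to derive K_4 free} with $p$ replaced by $p-1$. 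A $K_{r+1}$ in the cluster graph then yields a $K_{(p-1)(r+1)}$ in $G$, which contains $K_s$ precisely because $r=\lfloor(s-1)/(p-1)\rfloor$ gives $s\leq(p-1)(r+1)$. Note this bound is tight: your claimed $K_{p(r+1)}$ is more than what one can extract from $\alpha(G)\leq\Q(p,n)$, and the extra factor $p$ versus $p-1$ is exactly where your argument over-reaches. Once the lemma is applied with $p-1$, the cluster graph is $K_{r+1}$-free and the Tur\'an bound $e(R)\leq\tfrac12(1-1/r)m^2$ completes the upper bound as you describe.

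A secondary, smaller issue is in the lower bound: you want the blocks $H_i$ on $n/r$ vertices to have $\alpha(H_i)=o(\Q(p,n))$, but since $\Q(p,n/r)=\Theta(\Q(p,n))$ the natural random construction produces $\alpha(H_i)=\Theta(\Q(p,n))$, not $o(\Q(p,n))$; for the statement at hand ($f(n)=\Q(p,n)$, not $o(\Q(p,n))$) this is perfectly adequate, but the $o(\cdot)$ you wrote is not what the construction delivers, and the constant-factor bookkeeping ($\alpha(H_i)\leq\Q(p,n)$ exactly, versus $\leq C\,\Q(p,n)$) deserves a sentence.
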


The extremal graph in Theorem~\ref{bhs-general} is obtained from a balanced complete $r$-partite graph by replacing each of the $r$ parts by a $K_p$-free graph with independence number $\Q(p,n/r)$. Given integers $s$ and $p$, if $\lfloor\frac{s-1}{p-1}\rfloor=\lfloor\frac{s-1}{p}\rfloor$, then it follows from Theorem~\ref{bhs-general} that $K_s$ has no phase transition at $\Q(p,n)$ assuming Conjecture~\ref{Ramsey conj} holds. The complications of determining the existence of phase transitions come when the parameters $s$ and $p$ do not satisfy $\lfloor\frac{s-1}{p-1}\rfloor=\lfloor\frac{s-1}{p}\rfloor$. The first such instance is $s=4, p=2$. It was resolved in~\cite{BollErdos} using the Bollob\'as-Erd\H os graph, implying that $K_4$ has a phase transition at $f(n)=n/3$. More generally, the Bollob\'as-Erd\H os graph was used for the constructions in the cases where $s=2r, p=2$, and the extremal graph in such a case is obtained from a complete $r$-partite graph by changing one pair of classes to the Bollob\'as-Erd\H os graph. For larger $p$, we do not have such constructions, and the current methods for proving upper bounds are not expected to prove that no such construction using the Bollob\'as-Erd\H os graph exists.


We think that it would be extremely interesting to find such constructions. A good
first step toward this would be to predict that what type of constructions would be the most
useful for our problem. This could potentially be done by creating some weighted Tur\'an-type
problems, whose solutions would be useful for such approaches. Such steps were initiated in our paper: Theorem~\ref{cliqueweighting}, Theorem~\ref{K_5-free_chubby}, and Theorem~\ref{K_4-free-heavy} were conjectured by us at the early stage in our project, and were solved in~\cite{BL},~\cite{CliqueWeighting} and~\cite{CliqueWeighting2}. It is worth mentioning that weighted Tur\'an-type results are interesting by their own and could be applied to some other problems. More information about them could be found in \cite{BL}.

In the rest of this section, we state two general propositions, which could be proved by our methods. However, we are still far from fully answering Problem~\ref{prob}, even with the assumption that Conjecture~\ref{Ramsey conj} holds.

Conditional on Conjecture~\ref{Ramsey conj}, we first give the following generalization of Theorem~\ref{theorem K_9,10,11 density} by an argument similar to Section~\ref{9-11pf}.

\begin{proposition}\label{generalprop}
	If $2p+1\leq s\leq 2p+3$ and Conjecture~\ref{Ramsey conj} holds for $\ell=p\geq 4$, then \[
	\rho\tau(K_s,o(\Q(p,n)))= \frac{1}{4}.\]
\end{proposition}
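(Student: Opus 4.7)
The plan is to extend the template of Section~\ref{9-11pf} from $p = 4$ to general $p \geq 4$, borrowing the trick from Section~\ref{subsec13} when $p = 5$. For the lower bound $\rho\tau(K_s, o(\Q(p,n))) \geq 1/4$, I would use the construction from the remark following Theorem~\ref{theorem K_9,10,11 density}, generalized to arbitrary $p$: take two disjoint copies of a $K_{p+1}$-free graph on $n/2$ vertices with independence number $o(\Q(p,n))$ and $o(n^2)$ edges (existence via the first moment method, since classical Ramsey upper bounds give $\Q(p+1, N) = O(N^{1/p}\log^{O(1)}N) = o(\Q(p,n))$ for $p \geq 2$), and join them by a complete bipartite graph. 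The resulting graph is $K_{2p+1}$-free (hence $K_s$-free for $s \geq 2p+1$), has independence number $o(\Q(p,n))$, and has $n^2/4 + o(n^2)$ edges.

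For the upper bound, let $G$ be an $n$-vertex $K_s$-free graph with $\alpha(G) \leq \delta\Q(p,n)$ and apply the Regularity Lemma to produce the cluster graph $R = R(\epsilon, \gamma/2)$ on $m$ vertices. Conjecture~\ref{Ramsey conj} at $\ell = p$ supplies a polynomial gap $\Q(p, n) \ll \Q(p-1, n)/n^{c_0}$, which lets us apply Lemma~\ref{lemma to derive K_4 free} with parameter $p-1$: since $4(p-1) \geq 2p+3 \geq s$ for $p \geq 4$, a $K_4$ in $R$ would force a forbidden $K_s$ in $G$, so $R$ is $K_4$-free. When $p \geq 6$ the same application with $q = 3$ gives that $R$ is even $K_3$-free (since $3(p-1) \geq 2p+3$), and Mantel's theorem together with~(\ref{nonedge of R}) yields $e(G) \leq n^2/4 + o(n^2)$ directly.

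The remaining work is the case $p \in \{4, 5\}$, where $R$ might still carry triangles. I would rule out \emph{chubby} triangles in $R$---those with some edge of density $\geq 3/4 + \gamma$---by building a forbidden clique in $G$, following Claim~\ref{modified Claim 3.2a} and its analogue in Section~\ref{subsec13}. Pass to subsets $V_i^*$ with minimum crossing degree $\geq \gamma|V_i^*|/3$ and density $\geq 3/4 + \gamma/2$ on the fat pair, then invoke the dependent random choice (Lemma~\ref{lemma 3.1 4-partite generalized}). For $p = 4$, find a triangle in $V_1^*$, refine $V'_2, V'_3$, locate $K_4$'s there, and assemble $K_3 + K_4 + K_4 = K_{11}$. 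For $p = 5$, the stronger gap from Conjecture~\ref{Ramsey conj} at $\ell = 5$ allows finding a $K_4$ (not merely a triangle) in $V_1^*$; combined with a $K_4$ in $V'_2$ and a $K_5$ inside $N(K_4, V'_3)$ (which has size $\geq \gamma |V'_3|$ because each $v \in V'_2$ misses at most $(1/4 - \gamma/4)|V'_3|$ vertices in $V'_3$), this produces $K_4 + K_4 + K_5 = K_{13}$, contradicting $K_s$-freeness for $s \leq 13$. Once chubby triangles are excluded, Theorem~\ref{cliqueweighting} applied to the $K_4$-free $R$ (weight $3/4$ on triangle-edges and $1$ elsewhere) gives $w(R) \leq m^2/4 + o(m^2)$; together with the $O(\gamma)n^2$ slack from density windows in $(3/4, 3/4 + \gamma)$ and~(\ref{nonedge of R}), we conclude $e(G) \leq n^2/4 + O(\gamma) n^2$.

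The main obstacle is the parameter bookkeeping inside the chubby-triangle claim when $p = 5$: the dependent random choice output $S \subseteq V_1^*$ of size $\tfrac{1}{2}|V_1^*|^{1-c}$ must be large enough to contain a $K_4$, i.e., $\Q(4, |S|) > \alpha(G) = \delta \Q(p, n)$. Conjecture~\ref{Ramsey conj} at $\ell = 5$ is precisely what supplies the needed gap: with polynomial separation $\Q(5, n) \leq \Q(4, n)/n^{c_0}$, one picks $c < c_0 / a_4$ (where $a_4$ is the polynomial exponent of $\Q(4, \cdot)$) so that $|S|^{a_4} = n^{(1-c)a_4}$ dominates $\delta\Q(5, n) \sim n^{a_4 - c_0}$. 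Once this choice of $c$ is in place, the subsequent refinement steps---slicing to preserve density $\geq 3/4 + \gamma/4$ on the surviving pair, and extracting the remaining cliques in $V'_2$ and $V'_3$---mirror Section~\ref{subsec13} verbatim.
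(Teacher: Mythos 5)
Your upper-bound strategy is the same as the paper's: apply the Regularity Lemma, use Conjecture~\ref{Ramsey conj} at $\ell=p$ to invoke Lemma~\ref{lemma to derive K_4 free} with parameter $p-1$ so that $R$ is $K_4$-free, rule out \emph{chubby} triangles via dependent random choice, and finish with Theorem~\ref{cliqueweighting} plus the slack estimate~(\ref{nonedge of R}). The paper's sketch in fact runs Lemma~\ref{lemma 3.1 4-partite generalized} uniformly with $t=p-1$ to extract a $K_{p-1}$ in $S\subseteq V_1^*$, then a $K_4$ in $V_2'$ and a $K_p$ in $N(T,V_3')$, assembling $K_{(p-1)+4+p}=K_{2p+3}\supseteq K_s$; your $p=4$ and $p=5$ breakdowns ($K_3+K_4+K_4=K_{11}$, respectively $K_4+K_4+K_5=K_{13}$) are exactly the $p=4,5$ instances of this. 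Your extra observation for $p\geq 6$ is a genuine small simplification the paper does not state: since $3(p-1)\geq 2p+3$ once $p\geq 6$, Lemma~\ref{lemma to derive K_4 free} with $q=3$ already makes $R$ triangle-free, and Mantel finishes the upper bound without any chubby-triangle argument at all.

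On the lower bound, your construction is the paper's, but your justification for the existence of the sparse $K_{p+1}$-free graph is not right. The classical Ramsey upper bound $\R(p+1,m)=O(m^{p}/\log^{p-1}m)$ yields a \emph{lower} bound $\Q(p+1,N)=\Omega(N^{1/p}\,\mathrm{polylog}\,N)$, not the upper bound $O(N^{1/p}\,\mathrm{polylog}\,N)$ that you wrote; the best known \emph{upper} bound is $\Q(p+1,N)=O(N^{2/(p+2)}\,\mathrm{polylog}\,N)$, and for $p\geq 4$ this exponent is $\geq 1/(p-1)$, so the known Ramsey bounds alone do not force $\Q(p+1,n/2)=o(\Q(p,n))$. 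The paper instead appeals to Theorem~\ref{bhs-general} (with $s=p+1$, $r=1$), which handles the ``$o(n^2)$ edges'' part; the separation $\Q(p+1,\cdot)=o(\Q(p,\cdot))$ needed to get $\alpha(H)=o(\Q(p,n))$ is a more delicate point than ``the first moment method'' suggests, and you should either cite Theorem~\ref{bhs-general} as the paper does or argue that separation explicitly rather than attributing it to an off-the-shelf Ramsey estimate.
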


\noindent\textbf{Remark. }When $p=4$, we get Theorem~\ref{theorem K_9,10,11 density}. Theorem~\ref{K_13 with assumption density} is a special case when $p=5$ and $s=2p+3=13$. If $p\geq 6$, then $r\coloneqq\lfloor\frac{s-1}{p-1}\rfloor=2$. Combining with Theorem~\ref{bhs-general}, we conclude that $K_s$ has no phase transition at $\Q(p,n)$ if $2p+1\leq s\leq 2p+3$ and Conjecture~\ref{Ramsey conj} holds for $\ell=p\geq 4$.\\

\noindent\textit{Proof of Proposition~\ref{generalprop}.} For the upper bound, the proof idea is the same as in the proof of Theorem~\ref{theorem K_9,10,11 density} in Section~\ref{9-11pf} and we only give a sketch here. Let $G$ be an $n$-vertex $K_s$-free graph with $\alpha(G)\leq \delta \Q(p,n)$, where $2p+1\leq s\leq 2p+3$. Let $R\coloneqq R(\epsilon,\gamma/2)$ be the corresponding cluster graph after applying Szemer\'{e}di's Regularity Lemma to $G$. Since we assume that Conjecture~\ref{Ramsey conj} holds for $\ell=p\geq 4$, we can apply Lemma~\ref{lemma to derive K_4 free} and conclude that $R$ is $K_4$-free. 
	By Theorem~\ref{cliqueweighting}, it suffices to show that no triangle in $R$ is $(3/4+\gamma)$-chubby, where the definition of \emph{$(3/4+\gamma)$-chubby} is the same as in Section~\ref{9-11pf}. Suppose for a contradiction that $\{1,2,3\}$ spans a $(3/4+\gamma)$-chubby triangle in $R$ with $d(V_2,V_3)>3/4+\gamma$. We follow the notation in the proof of Claim~\ref{modified Claim 3.2a} and use the subsets $V^\ast_1, V^\ast_2, V^\ast_3$ with high minimum crossing degree. Apply Lemma~\ref{lemma 3.1 4-partite generalized} to $G[V^\ast_1, V^\ast_2, V^\ast_3]$ with $k=3$ and $t=p-1$. Since we assume that Conjecture~\ref{Ramsey conj} holds for $\ell=p$, instead of finding a $K_3$ in $S\subseteq V^\ast_1$, we could find a $K_{p-1}$ whose vertices have linearly many common neighbors in both $V^\ast_2$ and $V^\ast_3$. As $\alpha(G)\leq \delta \Q(p,n)$ and $d(V'_2,V'_3)>3/4$, we could find a $K_4$ in $V'_2$, say $T$, and a $K_p$  in $N(T,V'_3)$, which implies that $G$ contains a $K_{2p+3}\supseteq K_s$, a contradiction.
	
	For the lower bound, let $H$ be a $K_{p+1}$-free graph on $n/2$ vertices with independence number $o(\Q(p,n))$ and with $o(n^2)$ edges. Such a graph exists by taking $q=p+1$ in Theorem~\ref{bhs-general}. Let $G$ be obtained from the union of two vertex-disjoint copies of $H$, say $A$ and $B$, by joining every vertex in $A$ to every vertex in $B$. Then, $G$ is $K_{2p+1}$-free, thus $K_s$-free for $2p+1\leq s\leq 2p+3$, with $n^2/4+o(n^2)$ edges and $\alpha(G)\leq o(\Q(p,n))$. \qed\\

Notice that in all proofs of the main results, we only applied Lemma~\ref{lemma to derive K_4 free} with $q\leq 5$. Since Lemma~\ref{lemma to derive K_4 free} holds for all $p,q\geq 2$, it seems natural to generalize our method by applying Lemma~\ref{lemma to derive K_4 free} with larger $p,q$ and try to solve Problem~\ref{prob} for larger $s$, with $f(n)=o(\Q(p,n))$ for some $p<s$. Unfortunately, we would encounter obstacles here. When $s\leq (p-1)q$ with $q\geq 6$, although we could assume that Conjecture~\ref{Ramsey conj} holds for $\ell=p$ and thus forbid $K_q$ in the cluster graph $R$ by Lemma~\ref{lemma to derive K_4 free}, the information we obtain from the large cliques contained in $R$, say $K_{q-1},\ldots,K_5$, is not as useful as we thought due to the existence of $K_3$ in $R$. To be more specific, in order to generalize Theorem~\ref{theorem K_9,10,11 density} with the application of Theorem~\ref{cliqueweighting}, it is necessary to show that no triangle in $R$ is $(3/4+\gamma)$-chubby. However, by the proof of Proposition~\ref{generalprop}, this would require $s\leq 2p+3$, thus reduce the case $s\leq (p-1)q$ to the condition of Proposition~\ref{generalprop}. This again implies that weighted Tur\'an-type results would be beneficial to our approach.

Assume that Conjecture~\ref{Ramsey conj} holds for $\ell=t\geq 4$. By applying Lemma~\ref{lemma to derive K_4 free} with $q=4$ or the dependent random choice method, we have the following general result, which together with Theorem~\ref{bhs-general} implies that $K_s$ has no phase transition at $\Q(t,n)$ if $3t+1\leq s\leq 4t-4$ and $K_s$ has phase transition at $\Q(t,n)$ if $s=4t-3$. Notice that the case $t=4$, which implies that $s=13$, has been proved in~\cite{BL}.

\begin{proposition}
	If $3t+1\leq s\leq 4t-3$ and Conjecture~\ref{Ramsey conj} holds for $\ell=t\geq 4$, then \[
	\rho\tau(K_s,o(\Q(t,n)))= \frac{1}{3}.\]
\end{proposition}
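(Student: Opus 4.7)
The plan is to mirror the argument of Proposition~\ref{generalprop}: use the Regularity Lemma together with Lemma~\ref{lemma to derive K_4 free} to force the cluster graph to be $K_4$-free, then apply plain Tur\'an; and separately build a matching three-part blow-up construction.

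For the upper bound, fix $\gamma>0$, take the parameters as in~(\ref{parameter}), and let $G$ be an $n$-vertex $K_s$-free graph with $\alpha(G)\leq\delta\Q(t,n)$. Apply Lemma~\ref{regularity lemma} and let $R\coloneqq R(\epsilon,\gamma/2)$ be the resulting cluster graph on $m$ vertices. Since Conjecture~\ref{Ramsey conj} is assumed for $\ell=t$, Lemma~\ref{lemma to derive K_4 free} applies with $p=t$ and $q=4$, so every $K_4$ in $R$ gives rise to a $K_{4t}$ in $G$. The hypothesis $s\leq 4t-4$ ensures $4t\geq s$, so such a $K_{4t}$ would contain a forbidden $K_s$; hence $R$ is $K_4$-free. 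Tur\'an's theorem then yields $e(R)\leq m^2/3$, and since each edge of $R$ contributes at most $(n/m)^2$ edges to $G$, the edges of $G$ captured by $R$ number at most $n^2/3$. Adding the error bound~(\ref{nonedge of R}) for the remaining edges gives $e(G)\leq n^2/3+\gamma n^2$, and letting first $\gamma\to 0$ and then $\delta\to 0$ proves $\rho\tau(K_s,o(\Q(t,n)))\leq 1/3$.

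For the matching lower bound, I would imitate the blow-up constructions that certify tightness of Theorems~\ref{theorem K_9,10,11 density} and~\ref{K_13 with assumption density}. By Theorem~\ref{bhs-general} with $s=t+1$ and $p=t$ (so that $r=1$), there exists a $K_{t+1}$-free graph $H$ on $n/3$ vertices with $\alpha(H)=o(\Q(t,n))$ and $e(H)=o(n^2)$. Let $G$ be obtained from three vertex-disjoint copies $A_1,A_2,A_3$ of $H$ by joining every vertex of $A_i$ to every vertex of $A_j$ for each $i\neq j$. Then $e(G)=3(n/3)^2+3\,e(H)=n^2/3+o(n^2)$; every clique of $G$ meets each $A_i$ in a clique of $H$, hence in at most $t$ vertices, so $\omega(G)\leq 3t$ and $G$ is $K_s$-free whenever $s\geq 3t+1$; and since each pair $A_i,A_j$ is completely joined, every independent set of $G$ lies inside a single $A_i$, giving $\alpha(G)=\max_i\alpha(H)=o(\Q(t,n))$.

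I expect the only subtle point to be verifying the $\Q$-parameter hypothesis of Lemma~\ref{lemma to derive K_4 free} for $p=t,q=4$, that is, checking $\delta\Q(t,n)<\Q\bigl(t,\,n\cdot 2^{-\omega(n)\log^{3/4}n}\bigr)$ for the chosen $\delta$; this is the same polynomial-versus-subpolynomial comparison already performed throughout Sections~\ref{9-11pf} and afterwards. No new combinatorial obstacle appears, since we rely on plain Tur\'an rather than any weighted Tur\'an-type input. The thresholds $s\geq 3t+1$ and $s\leq 4t-4$ are exactly what the three-part blow-up and the $K_4$-free reduction respectively require.
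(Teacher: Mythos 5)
Your application of Lemma~\ref{lemma to derive K_4 free} with $p=t$ has an unverifiable hypothesis, and this is where the proof breaks. That lemma demands $\alpha(G)<\Q\bigl(t,\,n2^{-\omega(n)\log^{3/4}n}\bigr)$, while you only have $\alpha(G)\leq\delta\Q(t,n)$. The argument $n2^{-\omega(n)\log^{3/4}n}$ is $n^{1-o(1)}$, and $\Q(t,\cdot)$ grows polynomially (with a strictly positive exponent known only to lie in $[1/(t-1)-o(1),\,2/(t+1)+o(1)]$), so the ratio $\Q\bigl(t,n2^{-\omega(n)\log^{3/4}n}\bigr)/\Q(t,n)$ tends to $0$ and no fixed $\delta>0$ guarantees the required inequality for all large $n$. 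This is \emph{not} the ``same polynomial-versus-subpolynomial comparison'' you invoke from Section~\ref{9-11pf}: there, and throughout the paper, the lemma is always applied with some $p$ strictly smaller than the $t$ appearing in $\Q(t,n)$, precisely so that the polynomial gap between $\Q(p,\cdot)$ and $\Q(t,\cdot)$ supplied by Conjecture~\ref{Ramsey conj} absorbs the subpolynomial loss in the argument. With $p=t$ the conjecture does nothing; in fact your upper bound never uses it even though the proposition assumes it, and your argument would ``prove'' the statement for the wider range $s\leq 4t$ --- both signs that something is off.

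The paper takes $p=t-1$, $q=4$. Conjecture~\ref{Ramsey conj} for $\ell=t$, i.e.\ $\R(t-1,m)\leq\R(t,m)/m^{c}$, yields $\Q(t-1,n')\geq\Q(t,n)$ once $n'\geq n/\Q(t,n)^{c}$; and since $\Q(t,n)\geq n^{1/(t-1)-o(1)}$, the shrunken argument $n2^{-\omega(n)\log^{3/4}n}$ does exceed $n/\Q(t,n)^{c}$ for large $n$, so the lemma's hypothesis holds for any fixed small $\delta$. A $K_4$ in $R$ then produces $K_{4(t-1)}=K_{4t-4}\supseteq K_s$, which is exactly where the bound $s\leq 4t-4$ comes from (it is not merely a weakening of $s\leq 4t$, as your write-up suggests). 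Your Tur\'an step on the $K_4$-free cluster graph, the error accounting via~(\ref{nonedge of R}), and the three-part blow-up lower bound all match the paper and are correct; only the choice of $p$ and its justification need fixing.
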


\begin{proof}
	For the upper bound, the proof idea is the same as in the proofs of the main results in Section~\ref{proofofmain} and we only give a sketch here. Let $G$ be an $n$-vertex $K_s$-free graph with $\alpha(G)\leq \delta \Q(t,n)$, where $3t+1\leq s\leq 4t-3$. Let $R$ be the corresponding cluster graph on $m$ vertices after applying Szemer\'{e}di's Regularity Lemma to $G$. Since we assume that Conjecture~\ref{Ramsey conj} holds for $\ell=t$, we can show that $R$ is $K_4$-free by applying Lemma~\ref{lemma 3.1 4-partite generalized} three times as in the proof of Claim~\ref{chubby K4}. Therefore, $E(R)$ contributes at most $T(m,3)(n/m)^2=n^2/3$ edges to $G$. 
	
	For the lower bound, let $H$ be a $K_{t+1}$-free graph on $n/3$ vertices with $\alpha(H)= o(\Q(t,n))$ and with $o(n^2)$ edges. Such a graph exists by taking $q=t+1$ in Lemma~\ref{bhs-general}. Let $G$ be obtained from the union of three vertex-disjoint copies of $H$, say $A_1, A_2, A_3$, by joining every vertex in $A_i$ to every vertex in $A_j$ for every $ij\in\binom{[3]}{2}$. Then, $G$ is $K_s$-free for $s\geq 3t+1$ with at least $n^2/3$ edges.
\end{proof}

\section{Acknowledgment}

The authors are grateful to the anonymous referees for their careful reading and valuable comments.

\bibliographystyle{abbrv}
\bibliography{reference}

\end{document}